\renewcommand{\P}{\mathbb{P}}
\newcommand{\T}{\tau}
\newcommand{\one}{\mathbf{1}}
\newcommand{\reff}{\mathrm{Reff}}
\newcommand{\abs}[1]{\left\vert #1\right\vert}
\newcommand{\eps}{\varepsilon}
\newcommand{\del}[1]{}  % To be deleted
\newcommand{\poly}{\mathrm{poly}}
\renewcommand{\O}{{O}}
\newcommand{\Ot}{\widetilde{O}}
\theoremstyle{plain}
\newtheorem{thm}{Theorem}
\newtheorem{theorem}[thm]{Theorem}
\newtheorem{prop}[thm]{Proposition}
\newtheorem{lemma}[thm]{Lemma}
\theoremstyle{definition}
\newtheorem{remark}[thm]{Remark}
\newtheorem{corollary}[thm]{Corollary}
\numberwithin{thm}{section}
\newtheorem{openprob}{Open Problem}
\begin{document}
\title{Support of Closed Walks and Second Eigenvalue Multiplicity of the Normalized Adjacency Matrix}
\author{Theo McKenzie\footnote{\texttt{mckenzie@math.berkeley.edu.} Supported by NSF Grant DGE-1752814.}\\ UC Berkeley \and Peter M. R. Rasmussen\footnote{\texttt{pmrr@di.ku.dk.} Supported in part by grant 16582, Basic Algorithms Research Copenhagen (BARC), from the VILLUM Foundation.}\\ University of Copenhagen \and Nikhil Srivastava\footnote{\texttt{nikhil@math.berkeley.edu.} Supported by NSF Grants CCF-1553751 and CCF-2009011.}\\ UC Berkeley}
\date{}
\maketitle
\begin{abstract}
    We show that the multiplicity of the second normalized adjacency matrix eigenvalue of
	any connected graph of maximum degree $\Delta$ is bounded by $O(n
	\Delta^{7/5}/\log^{1/5-o(1)}n)$ for any $\Delta$, and by
	$O(n\log^{1/2}d/\log^{1/4-o(1)}n)$  for simple $d$-regular graphs when $d\ge \log^{1/4}n$. In fact, the same bounds hold for the number of
	eigenvalues in any interval of width $\lambda_2/\log_\Delta^{1-o(1)}n$
	containing the second eigenvalue $\lambda_2$. The main ingredient in
	the proof is a polynomial (in $k$) lower bound on the typical support
	of a closed random walk of length $2k$ in any connected graph,
	which in turn relies on new lower bounds for the entries of the Perron
	eigenvector of submatrices of the normalized adjacency matrix.
\end{abstract}
\tableofcontents
\section{Introduction} 
The eigenvalues of matrices associated with graphs play an important role in many areas of mathematics and computer science, so general phenomena concerning them are of broad interest. 
In their recent beautiful work on the equiangular lines problem, Jiang, Tidor, Yao, Zhang, and Zhao \cite{equiangular} proved the following novel result constraining the distribution of the adjacency eigenvalues of {\em all} connected graphs of sufficiently low degree.
\begin{theorem}\label{thm:zhao}
If $G$ is a connected graph of maximum degree $\Delta$ on $n$ vertices, then the multiplicity of the second largest eigenvalue of its adjacency matrix $A_G$ is bounded by $O(n\log \Delta /\log\log(n)).$
\end{theorem}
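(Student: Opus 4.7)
My plan is to use a moment / trace argument. Let $m$ denote the multiplicity of $\lambda_2 := \lambda_2(A_G)$. Since $G$ is connected, Perron--Frobenius implies that $\lambda_1(A_G)$ is simple, and for any positive integer $k$
\[
\lambda_1^{2k} + m\,\lambda_2^{2k} \;\le\; \operatorname{tr}(A_G^{2k}) \;=\; \sum_{v\in V(G)} N_{2k}(v),
\]
where $N_{2k}(v)$ denotes the number of closed walks of length $2k$ in $G$ starting at $v$. Rearranging gives $m \le \lambda_2^{-2k}\sum_v N_{2k}(v)$, so the task reduces to upper bounding $N_{2k}(v)$ by something combinatorial and then choosing $k$ to maximize the savings.

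To bound $N_{2k}(v)$, I would decompose closed walks according to their vertex support $S\ni v$: a walk with support $S$ is contained in the induced subgraph $G[S]$, and the number of such closed walks from $v$ in $G[S]$ is at most $\rho(G[S])^{2k}$, where $\rho$ denotes spectral radius. I plan to use two classical estimates: first, $\rho(G[S])^2 \le 2|E(G[S])|\le \Delta|S|$; and second, the number of connected subsets $S\ni v$ with $|S|=s$ in a graph of maximum degree $\Delta$ is at most $(e\Delta)^s$. This gives a master bound of the form
\[
N_{2k}(v) \;\le\; \sum_{s\le 2k+1} (e\Delta)^s\,(\Delta s)^{k}.
\]

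The main obstacle is that when $\lambda_2$ is close to $\lambda_1$ (which can itself be as large as $\Delta$), the trace inequality alone is too weak: the $\lambda_1^{2k}$ term on the right can swamp $m\lambda_2^{2k}$. The standard fix is to replace $A_G^{2k}$ by $p(A_G)^2$ for a degree-$k$ polynomial $p$ vanishing at $\lambda_1$; the entries of $p(A_G)$ remain weighted sums over walks of length $\le k$, so essentially the same support decomposition applies, but the $\lambda_1$-contribution is annihilated and one recovers a clean inequality $m\le \lambda_2^{-2k}\sum_v N_{2k}'(v)$ where $N'_{2k}(v)$ is a signed walk count still obeying the combinatorial upper bound above up to a constant.

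The final step is parameter optimization. With $k \asymp \log n / \log\log n$ and optimizing $s$ in the master bound, the dominant contribution comes from supports of size $s \asymp k/\log_\Delta n$, and the $(e\Delta)^s$ factor is precisely what produces the $\log \Delta$ in the numerator while the choice of $k$ produces the $\log\log n$ in the denominator. Comparing against $\lambda_2^{2k}$ (and using $\lambda_2\le \Delta$ in the worst case) then yields $m = O(n\log\Delta/\log\log n)$. The delicate part I expect will be verifying that the polynomial-truncation trick preserves enough of the combinatorial structure for the support decomposition to still go through with only constant-factor loss.
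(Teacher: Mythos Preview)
This theorem is not proved in the present paper---it is quoted from \cite{equiangular}---but the paper does describe and follow that strategy in Section~\ref{sec:mult}, so your proposal can be compared against it.

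Your proposal has a genuine gap at the support-decomposition step. In the master inequality
\[
N_{2k}(v)\;\le\;\sum_{s\le 2k+1}(e\Delta)^{s}(\Delta s)^{k}
\]
the factor $(\Delta s)^{k}$ is strictly increasing in $s$, so the sum is dominated by its final term and is of order $(e\Delta)^{2k}(2k\Delta)^{k}$, vastly worse than the trivial bound $N_{2k}(v)\le\Delta^{2k}$. There is no interior optimum in $s$; the claim that ``the dominant contribution comes from supports of size $s\asymp k/\log_{\Delta}n$'' is therefore false for this sum, and the parameter optimization that follows is empty. The polynomial trick does not repair this: the diagonal entries of $p(A_G)^{2}$ are \emph{signed} linear combinations of the $(A_G^{j})_{vv}$ with coefficients that depend on the global quantity $\lambda_1$, and a local bound of the form $\rho(G[S])^{2k}$ has no reason to control such a signed combination after cancellation. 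Restricting a walk to its support and annihilating the $\lambda_1$-contribution are operations that do not commute.

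The approach of \cite{equiangular}, adapted in Section~\ref{sec:mult} here, is structurally different. One deletes a small set of vertices to obtain a subgraph $H$, uses Cauchy interlacing to bound the multiplicity in $G$ by the number of deleted vertices plus the number of large eigenvalues of $H$, and then bounds $\operatorname{tr}(A_H^{2k})$ by arguing that the deletion destroys most long closed walks. No polynomial annihilator and no enumeration of connected subgraphs enters; the $\log\Delta/\log\log n$ rate comes from the size of the deleted set. Your outline contains neither the deletion step nor the interlacing step, and without them there is no way to separate $\lambda_1$ from $\lambda_2$ while retaining a nontrivial walk-count bound.
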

For their application to equiangular lines, \cite{equiangular} only needed to show that the multiplicity of the second eigenvalue is $o(n)$, but they asked whether the $O(n/\log\log(n))$ dependence in Theorem \ref{thm:zhao} could be improved, noting a huge gap between this and the best known lower bound of $\Omega(n^{1/3})$ achieved by certain Cayley graphs of $\mathrm{PSL}(2,p)$ (see \cite[Section 4]{equiangular}). Apart from Theorem \ref{thm:zhao}, there are as far as we are aware no known sublinear upper bounds on the second eigenvalue multiplicity for any general class of graphs, even if the question is restricted to Cayley graphs (unless one imposes a restriction on the spectral gap; see Section \ref{sec:related} for a discussion).

Meanwhile, in the theoretical computer science community, the largest eigenvalues of the {\em normalized} adjacency matrix $\tilde A_G:=D^{-1/2}_GA_GD^{-1/2}_G$ (for $D_G$ the diagonal matrix of degrees) have received much attention over the past decade due to their relation with graph partitioning problems and the unique games conjecture (see e.g. \cite{kolla2011spectral,barak2011rounding, louis2012many, gharan2013new,lee2014multiway, arora2015subexponential, barak2015making, lyons2018sharp}); in particular, many algorithmic tasks become easier on graphs with few large normalized adjacency eigenvalues. Thus, it is of interest to know how many of these eigenvalues there can be in the worst case.
%Bounds regarding the number of large eigenvalues of a graph, the {threshold rank}, are vital in spectral graph partitioning and have significant applications. 
%Approximation algorithms may exploit a low threshold rank to tackle graph problems as diverse as unique games and small set expansion \cite{arora15} as well as max cut \cite{gharan13}. 
%Worth mentioning are also the well-known higher-order Cheeger inequalities of \cite{lee2014multiway} with applications in clustering algorithms and spectral partitioning. When these applications generalize to non-regular graphs, they utilize the {normalized adjacency matrix} . 

In this work, we prove significantly stronger upper bounds than Theorem \ref{thm:zhao} on the second eigenvalue multiplicity for the normalized adjacency matrix. {Graphs are undirected and allowed to have
multiedges and self-loops, unless specified to be simple}. Order the eigenvalues of $\tilde A_G$ as
$\lambda_1(\tilde A_G)\ge\lambda_2(\tilde A_G)\geq \ldots\geq\lambda_n(\tilde A_G)$, and let
$m_G(I)$ denote the number of eigenvalues of $\tilde A_G$ in an interval $I$.
\begin{theorem}\label{thm:multbound1}
If $G$ is a connected graph of maximum degree $\Delta$ on $n$ vertices with $\lambda_2(\tilde A_G)=\lambda_2$, then\footnote{All asymptotics are as $n\rightarrow\infty$ and the notation $\Ot(\cdot)$  suppresses $\mathrm{polyloglog}(n)$ terms.}
\begin{equation}\label{eqn:mainbound1}
  m_G\left([(1-\frac{\log\log_\Delta n}{\log_\Delta n})\lambda_2,\lambda_2]\right)=\Ot\left(n\cdot \frac{\Delta^{7/5}}{\log^{1/5} n}\right).
\end{equation}
\end{theorem}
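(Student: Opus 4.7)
The natural approach is a trace/moment argument. Using $\mathrm{tr}(\tilde A^{2k})=\sum_i \lambda_i(\tilde A)^{2k}$ with $\lambda_1(\tilde A)=1$, if $m$ eigenvalues lie in $[(1-\eps)\lambda_2,\lambda_2]$ then
\[
m\cdot (1-\eps)^{2k}\lambda_2^{2k}\;\le\;\mathrm{tr}(\tilde A^{2k})-1,
\]
since all other $\lambda_i^{2k}$ are nonnegative. Setting $\eps\asymp \log\log_\Delta n/\log_\Delta n$ and $k\asymp 1/\eps$ makes $(1-\eps)^{2k}$ a positive constant, so the task reduces to an upper bound on $(\mathrm{tr}(\tilde A^{2k})-1)/\lambda_2^{2k}$ of the claimed form $\Ot(n\Delta^{7/5}/\log^{1/5}n)$.

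To handle the trace I would use the probabilistic identity $\mathrm{tr}(\tilde A^{2k})=\sum_v p_{2k}(v,v)$, where $p_{2k}(v,v)$ is the $2k$-step return probability of the simple random walk, and decompose the closed-walk mass by its vertex support:
\[
\mathrm{tr}(\tilde A^{2k})\;=\;\sum_{S\subseteq V} W_S,
\]
where $W_S$ is the total weight of closed walks whose vertex set is exactly $S$. Bounding each $W_S\le \mathrm{tr}((\tilde A|_S)^{2k})\le |S|\cdot \rho(\tilde A|_S)^{2k}$ reduces the problem to two quantitative questions: (i) for how many closed walks is the support small, and (ii) what is the spectral radius gap $1-\rho(\tilde A|_S)$ for $S\subsetneq V$.

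At this point the two ingredients promoted in the abstract enter. For (ii) I would feed the lower bound on the Perron eigenvector of $\tilde A|_S$ into the standard perturbation argument that removing a single vertex from a connected graph decreases the top Rayleigh quotient by essentially the squared Perron entry, yielding an explicit gap $g(|S|,\Delta)$ with $\rho(\tilde A|_S)\le 1-g(|S|,\Delta)$. For (i) I would invoke the polynomial-in-$k$ typical-support lower bound to argue that for a threshold $s_0\asymp k^c$ the aggregate weight of closed walks of support at most $s_0$ is a $k^{-\Omega(1)}$ fraction of the total, so that the dominant contribution to $\mathrm{tr}(\tilde A^{2k})$ comes from $|S|\ge s_0$ and is damped by $(1-g(s_0,\Delta))^{2k}$. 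Inserting the chosen $\eps$, $k$, and optimising $s_0$ against $g$ should then yield the desired bound.

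The main obstacle is parameter coordination: the Perron gap $g(s,\Delta)$ shrinks with $s$ while the typical-support estimate only becomes useful for $s$ polynomial in $k$, and both must be weighed against the moment error $(1-\eps)^{2k}\lambda_2^{2k}$, which in the normalized setting is delicate because $\lambda_2$ may itself be arbitrarily close to $1$. The exponents $7/5$ and $1/5$ in the final bound should emerge from balancing the $s$- and $\Delta$-scaling of the Perron-entry lower bound, the polynomial degree in the typical-support bound, and the choice of threshold $s_0$; I expect establishing the polynomial typical-support lower bound under nothing more than connectedness and a degree bound---rather than under a spectral-gap or expansion hypothesis---to be the most delicate step, and the place where the new Perron eigenvector estimates for principal submatrices are truly indispensable.
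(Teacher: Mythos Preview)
Your plan has a structural gap: you aim to bound $(\mathrm{tr}(\tilde A^{2k})-1)/\lambda_2^{2k}$ directly, but this quantity equals $\sum_{i\ge 2}\lambda_i^{2k}/\lambda_2^{2k}$, which is at least $m(1-\eps)^{2k}$ whenever $m$ eigenvalues lie in $[(1-\eps)\lambda_2,\lambda_2]$. No combinatorial re-indexing of the trace---by support of the walk or otherwise---can make the trace smaller than it actually is, so this inequality alone cannot beat the trivial bound $m\le (n-1)/(1-\eps)^{2k}=O(n)$. Your step (ii) also points the wrong way: the gap $1-\rho(\tilde A|_S)$ obtained from Perron-entry estimates is nontrivial only for \emph{small} $S$; for $|S|$ close to $n$ it can be polynomially small in $n$, so with $k=O(\log_\Delta n)$ the factor $(1-g(s_0,\Delta))^{2k}$ gives no useful damping on the large-support walks you call dominant. (In the paper the Perron-entry bound is not a second ingredient alongside the support bound; it is the tool used \emph{inside} the proof of Theorem~\ref{thm:closed} and is not invoked again in the multiplicity argument.)

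The missing idea is to modify the graph before taking the trace. The paper deletes a random set of $cn/s$ vertices to obtain a subgraph $H$; because by Theorem~\ref{thm:cyclesup} almost all closed-walk mass has support at least $s+1$, some such deletion destroys all but an $e^{-c}$ fraction of it, giving $\mathrm{tr}(\tilde A_H^{2k})\le 2e^{-c}(n\lambda_2^{2k}+1)$. The moment inequality you wrote, applied to $H$ rather than $G$, then yields $m_H([(1-\eps)\lambda_2,\lambda_2])=O(n/\log_\Delta n)$, and Cauchy interlacing transfers this back to $G$ at a cost of exactly the $cn/s$ deleted vertices. Choosing $s\asymp (k/(\Delta^7\log\Delta))^{1/5}$ as dictated by Theorem~\ref{thm:cyclesup} is what produces the $\Delta^{7/5}/\log^{1/5}n$ scaling. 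Your outline has the right ingredients except for this deletion/interlacing step, without which the argument cannot close.
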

Because of the relationship $\tilde A_G=\frac1dA_G$ when $G$ is regular, (\ref{eqn:mainbound1}) gives a substantial improvement on Theorem \ref{thm:zhao} in the regular case (in the non-regular case, the results are incomparable as they concern different matrices). In addition to the stronger $O(n/\polylog(n))$ bound, a notable difference between our result and Theorem \ref{thm:zhao} is that we control the number of eigenvalues in a small interval containing $\lambda_2$. Though we do not know whether the exponents in \eqref{eqn:mainbound1} are sharp, we show in Section \ref{sec:examples1} that constant degree bipartite Ramanujan graphs have at least $\Omega(n/\log^{3/2}n)$ eigenvalues in the interval appearing in \eqref{eqn:mainbound1}, indicating that $O(n/\polylog(n))$ is the correct regime for the maximum number of eigenvalues in such an interval when $\Delta$ is constant.
%Note that as $\lambda_2= \frac{2\sqrt{d-1}}{d}(1-o_n(1))$ for a connected graph by \cite{AB}, the width of the interval under consideration decreases with the degree. 

Theorem \ref{thm:multbound1} is nontrivial for all $\Delta=\widetilde{o}(\log^{1/7}n)$;  as remarked in \cite{equiangular}, Paley graphs have degree $\Omega(n)$ and second eigenvalue multiplicity $\Omega(n)$, so some bound on the degree is required to obtain sublinear multiplicity. 
%It is also clear that the multiplicity can be $\Omega(n)$ if the graph is disconnected, by taking many copies of a small graph. 
In Section \ref{sec:highdeg}, we present a variant of Theorem \ref{thm:multbound1} (advertised in the abstract) which yields nontrivial bounds in the special case of simple $d-$regular graphs with degrees as large as $d=\exp(\log^{1/2-\delta }n)$, which is considerably larger than the regime $d=O(\polylog(n))$ handled by \cite{equiangular}.

\newcommand{\sone}{\frac{1}{4}\left(\frac{k}{\Delta^{7}\log \Delta}\right)^{1/5}}

\newcommand{\expone}{\exp\left(-\frac{k}{65\Delta^{7}s^4}\right)}
\newcommand{\exptwo}{\exp\left(-\frac{k}{100s^3}\right)}

\newcommand{\support}{\mathrm{support}}

The main new ingredient in the proof of Theorem \ref{thm:multbound1} is a polynomial lower bound on the support of (i.e., number of distinct vertices traversed by) a simple random walk of fixed length conditioned to return to its starting point. The bound holds for any connected graph and any starting vertex and may be of independent interest.

%A {\em walk} of length $\ell$ in a graph $G$ is a sequence $\gamma=((x_0,x_1),(x_1,x_2)\ldots,(x_{\ell-1},x_\ell))$ such that each $(x_i,x_{i+1})$ is an edge in $G$; if $x_0=x_\ell$ then we call the walk {\em closed}. We refer to the vertex $x_0$ as the {\em root} of the walk, and to the number of { distinct} vertices {of $\{x_0, \dots, x_\ell\}$} as its {\em support}.

%The main ingredient in the proof of Theorem \ref{thm:multbound1} is a new lower bound on the typical support of a closed random  walk in a connected graph, rooted at any vertex.
\begin{theorem}\label{thm:closed} Suppose $G$ is connected and of maximum degree $\Delta$ on $n$ vertices and $x$ is any vertex in $G$.  Let $\gamma_x^{2k}=(x=X_0,X_1,\ldots,X_{2k})$ denote a random walk of length $2k<n$ sampled according to the simple random walk on $G$ starting at $x$. Then
\begin{equation}
\P(\textnormal{support}(\gamma^{2k}_x)\leq s|X_{2k}=X_0)\leq \expone\qquad\textrm{for}\quad s\leq \sone.
\end{equation}
\end{theorem}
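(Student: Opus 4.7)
The plan is to apply Bayes' rule and bound the numerator and denominator separately using spectral properties of principal submatrices of the normalized adjacency matrix $\tilde A := D^{-1/2} A D^{-1/2}$. The transition matrix $P := D^{-1}A$ is similar to $\tilde A$, so return probabilities equal diagonal entries of powers of $\tilde A$. Expanding in the eigenbasis and keeping only the Perron contribution,
\begin{equation*}
\P(X_{2k}=x) = (\tilde A^{2k})_{xx} \ \ge\ v_1(x)^2 = \frac{d(x)}{2m} \ \ge\ \frac{1}{n\Delta},
\end{equation*}
where $v_1 \propto D^{1/2}\mathbf{1}$ is the Perron eigenvector of $\tilde A$. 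For the joint numerator probability, walks with support contained in $S$ correspond to iterations of the substochastic matrix obtained by restricting $P$ to $S$, which is similar to the principal submatrix $\tilde A_S$; so $\P(\textnormal{support}(\gamma)\subseteq S,\, X_{2k}=x) = (\tilde A_S^{2k})_{xx} \le \mu_S^{2k}$ with $\mu_S := \lambda_1(\tilde A_S)$. The support of any walk is connected, so a union bound over connected sets $S \ni x$ of size $\le s$ (of which there are at most $(e\Delta)^s$ by the standard subtree count) yields
\begin{equation*}
\P(\textnormal{support}(\gamma^{2k}_x) \le s,\ X_{2k}=x) \ \le\ (e\Delta)^s \cdot \max_{\substack{S \ni x \\ |S| \le s}} \mu_S^{2k}.
\end{equation*}

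The core new ingredient is a Perron-type lower bound
\begin{equation*}
1-\mu_S \ \ge\ \frac{c}{\Delta^{a} s^{b}}
\end{equation*}
for every connected proper subset $S \subsetneq V$ of size $s$, with exponents tuned so the final answer reads $\Delta^7 s^4$. The starting point is the variational identity
\begin{equation*}
1-\mu_S = \sum_{uv \in E}\left(\frac{w(u)}{\sqrt{d(u)}} - \frac{w(v)}{\sqrt{d(v)}}\right)^2,
\end{equation*}
where $w$ is the Perron eigenvector of $\tilde A_S$, extended by zero to $V$ and normalized so $\|w\|_2=1$. Every edge from $\partial S$ to $V\setminus S$ contributes $w(u)^2/d(u)$ to this sum, so it suffices to produce one boundary vertex $u$ with $w(u) \gtrsim (\Delta^{a} s^{b})^{-1/2}$. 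Pigeonhole gives $w(v_0) \ge 1/\sqrt{s}$ at the maximizer $v_0$, and the eigenvector equation $\mu_S\sqrt{d(u)}\,w(u)=\sum_{v\sim u,\, v\in S} w(v)/\sqrt{d(v)}$ can be used to transport mass along a shortest path in $S$ from $v_0$ to $\partial S$. The genuine obstacle, and where essentially all the work lies, is to show that $w$ decays by only a \emph{polynomial} rather than an \emph{exponential} factor in $s$ along such a path; a naive one-step iteration of the eigenvector equation loses a factor of up to $\Delta$ per vertex, which is disastrous, so one must exploit smoothness coming from the Dirichlet identity itself (e.g.\ via Cauchy--Schwarz, $|a(v_0)-a(u)|\le \sqrt{L(1-\mu_S)}$ for $a(\cdot):=w(\cdot)/\sqrt{d(\cdot)}$) together with a more delicate analysis to obtain the correct polynomial exponents.

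Combining the three bounds,
\begin{equation*}
\P(\textnormal{support}(\gamma^{2k}_x)\le s \mid X_{2k}=x) \ \le\ n\Delta \cdot (e\Delta)^s \cdot \exp\!\left(-\frac{2k}{c\Delta^{a} s^{b}}\right),
\end{equation*}
and the hypothesis $s \le \frac{1}{4}(k/(\Delta^7 \log \Delta))^{1/5}$ is chosen precisely so that $s\log(e\Delta)$, together with the $\log(n\Delta)$ term from the denominator (absorbed using $2k<n$ in the non-trivial regime of the conclusion), are each dominated by a small fraction of the exponent, yielding the stated bound $\exp(-k/(65\Delta^7 s^4))$. The bottleneck is therefore the polynomial-in-$s$ Perron eigenvector lower bound, which is precisely what allows the support lower bound to be polynomial rather than logarithmic in $k$.
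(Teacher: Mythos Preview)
Your outline has the right global shape---union bound the numerator over connected $s$-sets, control each by $\lambda_1(\tilde A_S)^{2k}$, and invoke a polynomial Perron estimate---but the final combination step contains a genuine gap. The lower bound $\P(X_{2k}=x)\ge 1/(n\Delta)$ injects a factor of $n$ into the conditional probability, and the only hypothesis relating $n$ to $k$ is $2k<n$, which gives no \emph{upper} bound on $\log n$; your parenthetical ``absorbed using $2k<n$'' has the inequality pointing the wrong way. Concretely, at the extremal $s\asymp (k/(\Delta^7\log\Delta))^{1/5}$ your exponent $2kc/(\Delta^a s^b)$ is at best polynomial in $k$, whereas $\log n$ may be arbitrarily large (and is $\Theta(k\log\Delta)$ in the application of Section~\ref{sec:mult}, already swamping the exponent). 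The paper avoids this by never comparing to the full return probability: instead of bounding $1-\lambda_1(\tilde A_S)$ absolutely, it builds a connected extension $T\supset S$ with $|T|=2s$ and $\lambda_1(\tilde A_T)\ge\lambda_1(\tilde A_S)\bigl(1+c/(\Delta^7 s^4)\bigr)$ (Lemma~\ref{lem:increase}), then lower bounds $\P_x(X_{2k}=x)\ge\P_x(W^{2k,2s})\ge e_x^T\tilde A_T^{2k}e_x$, passing from $e_x$ to the trace-maximizing vertex of $T$ via a path-rerooting trick costing only $\Delta^{O(s)}$ (inequality~\eqref{eqn:xzwalks}). This yields the $n$-free ratio bound of Theorem~\ref{thm:cyclesup}, from which the conditional statement follows since $\P_x(W^{2k,2s})\le\P_x(X_{2k}=x)$.

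Separately, you correctly flag the boundary Perron-entry lower bound as the heart of the matter but do not prove it. Your Cauchy--Schwarz idea does in fact complete to an absolute gap $1-\mu_S\ge 1/(4\Delta s^2)$: telescope $a:=w/\sqrt{d}$ along a shortest path in $S$ from the maximizer $v_0$ to the boundary and use $\sum_{(u,v)\in E}(a(u)-a(v))^2=1-\mu_S$; either the path contribution already exceeds $a(v_0)^2/(4s)\ge 1/(4\Delta s^2)$, or $a$ at the boundary is at least $a(v_0)/2$ and a single boundary edge contributes $\ge 1/(4\Delta s)$. This is even stronger in $s$ than what the paper obtains, but because it is a gap to $1$ rather than to $\lambda_1(\tilde A_T)$ it does not repair the $n$-dependence above. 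The paper's route is different: it proves $\psi_S(u)\ge 1/(\Delta^{5/2}\lambda_1(\tilde A_S)|S|^{5/2})$ at some boundary vertex via hitting times and electrical flows (Theorem~\ref{lem:electric}), feeds this into a one-vertex perturbation bound (Lemma~\ref{lem:test}), and iterates $s$ times to produce the extension $T$.
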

In particular, this means that for constant $\Delta$, the typical support of a closed random walk of length $2k$ is least $\Omega(k^{1/5})$.
It may be tempting to compare Theorem \ref{thm:closed} with the familiar fact that a random closed walk of length $2k$ on $\Z$ (or in continuous time, a standard Brownian bridge run for time $2k)$ attains a maximum distance of $\Omega(\sqrt{k})$ from its origin. However, as seen in Figure \ref{fig:biglolli}, there are regular graphs for which a closed walk of length $2k$ from a particular vertex $x$ travels a maximum distance of only $\polylog(k)$ with high probability. Theorem \ref{thm:closed} reveals that nonetheless the number of {\em distinct} vertices traversed is always typically $\poly(k)$. 
 We do not know if the specific exponent of $k^{1/5}$ supplied by Theorem \ref{thm:closed} is sharp, but considering a cycle graph shows that it is not possible to do better than $k^{1/2}$.
\begin{figure}\label{fig:biglolli}
    \centering
    \includegraphics[height=2in]{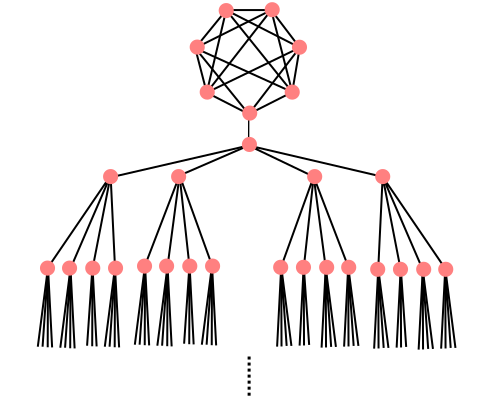}
    \caption{For a regular graph composed of a near-clique attached to an infinite tree, a closed walk of length $2k$ starting from within the near-clique does not typically go deeper than $O(\log k)$ down the tree. However, the support of such a closed walk is typically $k^{\Theta(1)}$. See Section \ref{sec:examples2} for a more detailed discussion.}

\end{figure}

 Given Theorem \ref{thm:closed}, our proof of Theorem \ref{thm:multbound1} follows the strategy of \cite{equiangular}: since most closed walks in $G$ have large support, the number of such walks may be drastically reduced by deleting a small number of vertices from $G$. By a moment calculation relating the spectrum to self return probabilities and a Cauchy interlacing argument, this implies an upper bound on the multiplicity of $\lambda_2(\tilde A_G)$. The crucial difference is that we are able to delete only $n/\polylog(n)$ vertices whereas they delete $n/\poly\log\log(n)$.

The key ingredient in our proof of Theorem \ref{thm:closed} is a result regarding the Perron eigenvector (i.e., the unique, strictly positive eigenvector with eigenvalue $\lambda_1$) of a submatrix of $\tilde A$. 
\begin{theorem}\label{thm:irregperron}
For any graph $G=(V,E)$ of maximum degree $\Delta$, take any set of vertices $S\subsetneq V$ such that the induced subgraph on $S$ is connected, and let $\psi_S$ be the  $\ell_2$-normalized Perron vector of $\tilde A_S$, the principal submatrix of $\tilde A$ corresponding to vertices in $S$. Then there is a vertex $u\in S$ which is adjacent to $V\setminus S$ such that
\begin{equation} \psi_S(u)\geq 1/(\Delta^{5/2}\lambda_1(\tilde A_S)|S|^{5/2}).\end{equation}
\end{theorem}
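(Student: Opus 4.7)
Write $B\subseteq S$ for the inner boundary (vertices of $S$ with a neighbor in $V\setminus S$). Let $\tilde\psi$ denote $\psi_S$ extended by zero to $V$; a direct computation shows $\tilde A\tilde\psi = \lambda_1\tilde\psi + \eta$, where $\eta$ is supported on the outer boundary $N(S)\setminus S$ with $\eta(w) = \sum_{v\sim w,\,v\in S}\psi_S(v)/\sqrt{d_v d_w}$. Passing to the connected component of $G$ containing $S$, the Perron vector $\chi(v)=\sqrt{d_v/(2|E|)}$ of $\tilde A$ satisfies $\tilde A\chi = \chi$, and pairing the previous relation with $\chi$ yields the \emph{flux identity}
\[ (1-\lambda_1)\sum_{v\in S}\psi_S(v)\sqrt{d_v} \;=\; \sum_{u\in B}\psi_S(u)\,\frac{|N(u)\setminus S|}{\sqrt{d_u}}. \]

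From here the plan is to extract two complementary lower bounds on $M:=\max_{u\in B}\psi_S(u)$. First, since $\psi_S\geq 0$ and $\|\psi_S\|_2=1$ imply $\|\psi_S\|_1\geq 1$, the left side is at least $1-\lambda_1$, while the right side is at most $\sqrt{\Delta}|S|M$ (using $|N(u)\setminus S|/\sqrt{d_u}\leq\sqrt{\Delta}$), giving $M\geq (1-\lambda_1)/(|S|\sqrt{\Delta})$. This alone already yields the target bound whenever $1-\lambda_1\gtrsim (\lambda_1\Delta^{2}|S|^{3/2})^{-1}$. Second, retaining a single term in the eigenvalue equation produces the pointwise inequality $\psi_S(u)\geq \psi_S(v)/(\lambda_1\sqrt{d_u d_v})$ for every edge $uv$ in $S$; iterated along a path of length $r$ from some $v^*$ with $\psi_S(v^*)\geq 1/\sqrt{|S|}$ (which exists by $\ell_2$-normalization) to a boundary vertex, this gives $M\geq 1/(\sqrt{|S|}(\lambda_1\Delta)^r)$, meeting the target as soon as $r\leq \tfrac{5}{2}+2\log_\Delta|S|$.

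The hard case is the remaining regime, where $1-\lambda_1$ falls below the above threshold \emph{and} every high-value vertex of $\psi_S$ is at distance more than $O(\log_\Delta|S|)$ from $B$. My plan for this regime is a localization argument: if $\psi_S$ placed the bulk of its $\ell_2$-mass on the deep interior $T:=\{v\in S:\mathrm{dist}(v,B)>r\}$, then $\psi_S|_T$ would be essentially an eigenvector of the smaller principal submatrix $\tilde A_T$ with Rayleigh quotient close to $\lambda_1$, forcing $\lambda_1(\tilde A_S)-\lambda_1(\tilde A_T)$ to be tiny. A combinatorial lower bound on this gap, coming from the ``leakage'' edges between $T$ and $S\setminus T$, would contradict the assumption that $1-\lambda_1$ is below the threshold once $r\gg\log_\Delta|S|$; this guarantees a high-value vertex within the distance that Step~3 requires. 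Producing a clean quantitative comparison of $\lambda_1(\tilde A_S)$ with $\lambda_1(\tilde A_T)$, and tuning the thresholds so that the three regimes overlap to yield precisely the exponent $5/2$, is what I expect to be the main obstacle.
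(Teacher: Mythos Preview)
Your flux identity and the two easy bounds are correct, but the third ``hard case'' is where the actual content of the theorem lives, and your plan there has a real gap. You want a combinatorial lower bound on $\lambda_1(\tilde A_S)-\lambda_1(\tilde A_T)$ ``coming from the leakage edges'' between $T$ and $S\setminus T$, but no such bound exists in general: removing a thin shell of vertices from $S$ can change $\lambda_1$ by an arbitrarily small amount (think of shortening a long path by a few vertices). More to the point, the only way to lower bound that increment via a test-vector argument is to know that the Perron vector of $\tilde A_T$ is not too small on its own boundary---which is precisely the statement you are trying to prove, one level down. So the localization step is circular as stated. It is also unclear how a lower bound on $\lambda_1(\tilde A_S)-\lambda_1(\tilde A_T)$ would contradict smallness of $1-\lambda_1(\tilde A_S)$; these are different quantities. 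Finally, the Mangrove-tree example in the paper (a multi-edge path with pendant trees) lands squarely in your hard regime: there $1-\lambda_1\asymp |S|^{-2}$, so bound~(A) gives only $M\gtrsim |S|^{-3}$, while every vertex with $\psi_S(v)\ge |S|^{-1/2}$ sits at distance $\Theta(|S|)$ from the boundary, making bound~(B) exponentially small. Yet the truth there is $M\asymp |S|^{-5/2}$, matching the theorem, so your missing step would have to recover the full strength of the result.

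The paper's argument is entirely different and avoids any case split. It interprets the ratio $\psi_S(x)/\psi_S(t)$ (up to degree factors) as the limit $\lim_{k\to\infty}\P_x(\tau_{V\setminus S}>k)/\P_t(\tau_{V\setminus S}>k)$ of survival probabilities for the random walk on $G$ killed upon exiting $S$, where $t$ is the maximizer of $\psi_S$. Then it uses an electrical-network argument: contracting the inner boundary $B$ to a single vertex and looking at the unit voltage flow to $t$, Kirchhoff's law plus the trivial bound $\mathrm{Reff}\le |S|$ produce a vertex $x$ adjacent to $B$ with $\P_x(\tau_t<\tau_B)\ge 1/(\Delta|S|^2)$. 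A simple conditioning argument turns this hitting-probability lower bound into the desired survival-ratio lower bound, and one application of the eigenvector equation moves from $x$ to a neighbor $u\in B$. The key insight you are missing is this probabilistic/potential-theoretic representation of Perron entries; it is what lets one find the right vertex near the boundary without having to control the whole landscape of $\psi_S$.
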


When we restrict this result to $G$ being a $d$-regular graph and pass to the adjacency matrix, we achieve a result about the unnormalized adjacency matrix of irregular graphs that may be of independent interest. 

\begin{corollary}\label{cor:irregperron1}
Let $H=(V,E)$ be an irregular connected graph of maximum degree $\Delta$ with at least two vertices, and let $\phi_H$ be the  $\ell_2$-normalized Perron vector of $A_H$. Then there is a vertex $u\in V$ with degree strictly less than $\Delta$ satisfying
\begin{equation} \phi_H(u)\geq 1/(\Delta^2\lambda_1(A_H)|V|^{5/2}).\end{equation}
\end{corollary}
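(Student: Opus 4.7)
The plan is to reduce Corollary~\ref{cor:irregperron1} directly to Theorem~\ref{thm:irregperron} by realizing $H$ as the induced subgraph on a proper vertex subset of a $\Delta$-regular multigraph. The key observation is that for a regular graph the normalized and unnormalized adjacency matrices differ only by a scalar, so after an elementary rescaling the two statements line up.

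The construction is the standard ``double cover by deficiency matching.'' Take two disjoint copies $H_1, H_2$ of $H$, and for every vertex $v$ with positive deficiency $\Delta - \deg_H(v)$, add that many parallel edges between the copy of $v$ in $H_1$ and the copy of $v$ in $H_2$. The resulting multigraph $G$ is $\Delta$-regular; moreover, since $H$ is irregular, at least one crossing edge is added. The paper's allowance of multi-edges makes this admissible. Set $S := V(H_1)$. Then $S \subsetneq V(G)$, the induced subgraph of $G$ on $S$ is exactly $H$ (no crossing edges lie inside $S$), and it is connected because $H$ is. Hence the hypotheses of Theorem~\ref{thm:irregperron} hold for the pair $(G,S)$.

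Next I would translate the conclusion back. Because $G$ is $\Delta$-regular we have $\tilde A_G = A_G/\Delta$, and therefore the principal submatrix satisfies $\tilde A_S = A_H/\Delta$. In particular $\psi_S = \phi_H$ (identical $\ell_2$-normalized Perron vectors) and $\lambda_1(\tilde A_S) = \lambda_1(A_H)/\Delta$. Theorem~\ref{thm:irregperron} then produces a vertex $u \in S$ that is adjacent in $G$ to some vertex of $V(G) \setminus S$, with
\begin{equation*}
\phi_H(u) \;=\; \psi_S(u) \;\geq\; \frac{1}{\Delta^{5/2}\, \lambda_1(\tilde A_S)\, |S|^{5/2}} \;=\; \frac{1}{\Delta^{3/2}\, \lambda_1(A_H)\, |V|^{5/2}} \;\geq\; \frac{1}{\Delta^{2}\, \lambda_1(A_H)\, |V|^{5/2}}.
\end{equation*}
Every crossing edge in the construction is incident to a vertex whose $H$-degree is strictly less than $\Delta$, so $\deg_H(u) < \Delta$ as required.

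There is essentially no obstacle here beyond setting up the embedding correctly; the reduction is purely bookkeeping. The irregularity hypothesis enters in exactly one place---it guarantees that the crossing edge set is nonempty, so that the vertex $u$ delivered by Theorem~\ref{thm:irregperron} really has degree less than $\Delta$ in $H$ rather than being full-degree. The slight slack between the $\Delta^{3/2}$ obtained from the embedding and the $\Delta^{2}$ stated in the corollary leaves a small margin that could in principle be tightened, but is not needed for the stated bound.
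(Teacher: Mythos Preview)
Your proof is correct and takes essentially the same approach as the paper: embed $H$ as an induced subgraph $S$ of a $\Delta$-regular multigraph $G$, observe that $\tilde A_S = A_H/\Delta$ so $\psi_S=\phi_H$, and read off the bound. The only cosmetic difference is that the paper invokes the intermediate inequality \eqref{eqn:transitionperron} from the proof of Theorem~\ref{lem:electric} (before the final $\Delta^{1/2}$ loss incurred when passing from $D_S^{1/2}\psi_S$ to $\psi_S$) rather than the statement of Theorem~\ref{thm:irregperron}, but as you note your bound of $\Delta^{3/2}$ is already stronger than the stated $\Delta^{2}$.
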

Corollary \ref{cor:irregperron1} may be compared with existing results in spectral graph theory on the ``principal ratio'' between the largest and smallest entries of the Perron vector of a connected graph. % (see Section \ref{sec:related} for a detailed discussion). 
The known worst case lower bounds on this ratio are necessarily exponential in the diameter of the graph \cite{CG,tait2015characterizing}, and it is known that the Perron entry of the highest degree vertex cannot be very small (see e.g. \cite[Ch. 2]{stevanovic2014spectral}). 
%The worst case examples involve showing that two vertices separated by a long chain of degree 2 vertices have very different values in the Perron vector \cite{tait2015characterizing}. 
Corollary \ref{cor:irregperron1} articulates that there is always at least one vertex of non-maximal degree for which the ratio is only polynomial in the number of vertices. 

The proof of Theorem \ref{thm:irregperron} is based on an analysis of hitting times in the simple random walk on $G$ via electrical flows, and appears in Section \ref{sec:perron}. Combined with a perturbation-theoretic argument, it enables us to show that any small connected induced subgraph $S$ of $G$ can be extended to a slightly larger induced subgraph with significantly larger Perron value $\lambda_1(\tilde A_S)$. With some further combinatorial arguments, this implies that closed walks cannot concentrate on small sets, yielding Theorem \ref{thm:closed} in Section \ref{sec:closed}, which is finally used to deduce Theorem \ref{thm:multbound1} in Section \ref{sec:mult}.

We show in Section \ref{sec:examples3} via an explicit example (Figure \ref{fig:palm}) that the exponent of $5/2$ appearing in Corollary \ref{cor:irregperron1} is sharp up to polylogarithmic factors. We conclude with a discussion of open problems in Section \ref{sec:openprobs}.

\begin{figure}
    \centering
    \includegraphics[width=4.5in]{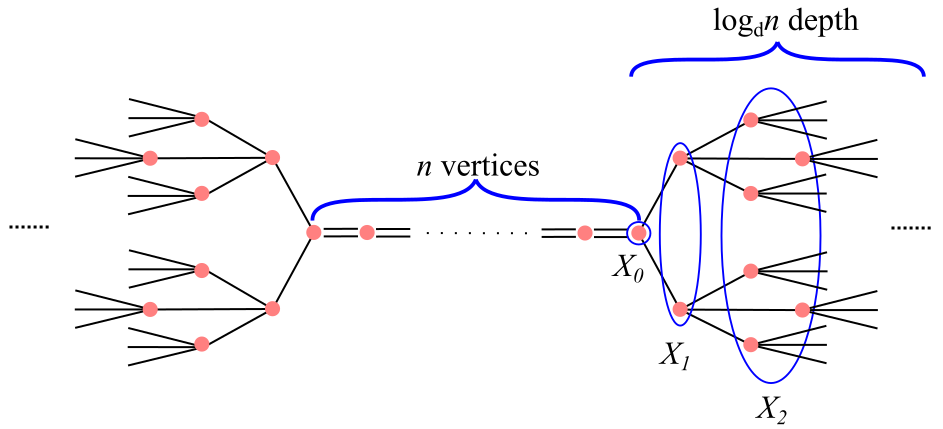}
    \caption{An example of a graph where all vertices $u$ that are not of maximum degree have $\psi(u)=\tilde O(n^{-5/2})$. The circled sets $X_0$, $X_1$ and $X_2$ will be used in the analysis of the graph in Section \ref{sec:examples3}.}
    \label{fig:palm}
\end{figure}

\begin{remark}[Higher Eigenvalues]
An update of the preprint of \cite{equiangular} generalizes Theorem \ref{thm:zhao} to the multiplicity of the $j$th eigenvalue. Our results can also be generalized in this manner by some nominal changes to the arguments in Section \ref{sec:mult}, but for simplicity we focus on $\lambda_2$ in this paper.
\end{remark}

\subsection{Higher degree regular graphs}\label{sec:highdeg}

If $G=(V,E)$ is a simple, $d$-regular graph, and $S\subsetneq V$ such that $|S|\leq d$, then necessarily all vertices of $S$ are adjacent to vertices in $V\setminus S$. Therefore we can improve the bound from Theorem \ref{thm:irregperron} by assuming the vertex on the boundary is the maximizer of the Perron vector, which has value $\psi_S(u)\geq 1/\sqrt{|S|}$. This leads to the following variants of our main results for simple, regular graphs of sufficiently high degree. 

\begin{theorem}\label{thm:highdegmult}
$G$ is simple, $d-$regular, and connected with $\lambda_2=\lambda_2(A_G)$, then
\begin{equation}\label{eqn:mainbound2}
    m_G\left([(1-\frac{\log\log_d n}{\log_d n})\lambda_2,\lambda_2]\right)=
    \begin{cases}
    \Ot\left(\frac{n}{d}\right) & \textrm{ when } d=o(\log^{1/4} n)\\
    \Ot\left(\frac{n\log^{1/2} d}{\log^{1/4} n}\right) &\textrm{ when }
    d=\Omega(\log^{1/4}n).
    \end{cases}
\end{equation}
\end{theorem}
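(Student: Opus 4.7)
The strategy is to rerun the three-step chain (improved Perron bound $\Rightarrow$ improved support-of-closed-walks bound $\Rightarrow$ multiplicity bound) used to prove Theorem \ref{thm:multbound1}, but feeding in the much stronger Perron estimate that is available for simple $d$-regular graphs when $|S|\le d$. The starting observation, made in the paragraph preceding the theorem, is that for such $S$ every vertex has at most $|S|-1<d$ neighbors inside $S$ (since $G$ is simple and $d$-regular), so $S$ equals its own boundary. In particular, the maximizer $u$ of the $\ell_2$-normalized Perron vector $\psi_S$ of $\tilde A_S$ is automatically adjacent to $V\setminus S$, and the normalization forces $\psi_S(u)\ge 1/\sqrt{|S|}$. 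This removes both the factor $\Delta^{5/2}$ and most of the polynomial dependence on $|S|$ that appeared in Theorem \ref{thm:irregperron}.

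Next, I would run the proof of Theorem \ref{thm:closed} from Section \ref{sec:closed} under the added hypothesis $|S|\le d$, substituting $\psi_S(u)^2 \ge 1/|S|$ wherever the old Perron bound is invoked. The Perron-inflation step that produced the factor $\Delta^7$ in the original proof becomes degree-free, and the $\log \Delta$ term (which arises as the number of doubling stages used to extend $S$ in the geometric refinement) is replaced by $\log d$. The resulting support estimate takes the shape $\P(\mathrm{support}(\gamma_x^{2k})\le s \mid X_{2k}=X_0) \le \exp(-\Omega(k/(s^{a}\log d)))$ valid for $s$ up to the minimum of $d$ and a term of order $(k/\log d)^{1/b}$, where the exponents $a,b$ are fixed by the bookkeeping so that when $k=\Theta(\log_d n)$ the support threshold becomes of order $\log^{1/4}n/\log^{1/2}d$, matching the stated theorem.

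Third, I would feed this improved support estimate into the moment computation and Cauchy-interlacing argument in Section \ref{sec:mult} with $k=\Theta(\log_d n)$, obtaining a multiplicity bound of order $n/s$ where $s$ is the typical support. The two cases of the theorem correspond exactly to which cap on $s$ is binding: when $d=o(\log^{1/4}n)$ the hypothesis $|S|\le d$ binds first, so we can only force support of order $d$ and the multiplicity bound is $\widetilde O(n/d)$; when $d=\Omega(\log^{1/4}n)$ the walk bound $(k/\log d)^{1/b}$ is the tighter constraint, yielding typical support of order $\log^{1/4}n/\log^{1/2}d$ and hence multiplicity $\widetilde O(n\log^{1/2}d/\log^{1/4}n)$.

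The main obstacle is that the improved Perron estimate is valid only while $|S|\le d$, so the inductive ``extend $S$ to $S'$'' procedure in the proof of Theorem \ref{thm:closed} must be halted as soon as it threatens to cross this ceiling; one must then check that the probabilistic bounds combined across doubling stages still give the claimed exponents, and that the replacement of $\log\Delta$ by $\log d$ propagates consistently. Once this stopping rule is in place and the bookkeeping is done carefully, the two branches of the theorem emerge cleanly from whichever cap on the support is active.
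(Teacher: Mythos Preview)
Your proposal is correct and takes essentially the same approach as the paper: the improved Perron estimate $\psi_S(u)\ge 1/\sqrt{|S|}$ (valid while the doubled set has $|T|=2s\le d$) is fed into Lemma~\ref{lem:test} to give $\lambda_1(A_T)\ge \lambda_1(A_S)\bigl(1+\Omega(\lambda_1(A_S)^{-3})\bigr)$, yielding the closed-walk bound of Theorem~\ref{thm:highdegwalk} with the cap $s\le\min\{\tfrac18(k/\log d)^{1/4},\,d/2\}$, after which the argument of Section~\ref{sec:mult} is rerun verbatim and the two cases arise exactly as you say from which cap binds. One small correction to your bookkeeping: the $\log d$ factor enters only through the cap on $s$ (via the $d^{2s}$ subgraph count in $|\Gamma_x^s|$), not in the exponent $\exp(-\Omega(k/s^3))$ of the support bound itself, and the extension is a single doubling $s\to 2s$ rather than multiple geometric stages.
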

The above theorem is based on the following  corresponding result for closed walks.
\begin{theorem}\label{thm:highdegwalk}
If $G$ is simple, $d-$regular, and connected on $n$ vertices and $\gamma$ is a random closed walk of length $2k<n$ started at any vertex in $G$, then:
\begin{equation}
\Pr(\support(\gamma)\le s)\leq \exptwo \qquad\textrm{for}\quad s\le \min\left\{\frac{1}{8}\left(\frac{k}{\log d}\right)^{1/4}, \frac{d}{2}\right\}.
\end{equation}
\end{theorem}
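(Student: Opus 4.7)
The plan is to mirror the proof of Theorem \ref{thm:closed} from Section \ref{sec:closed} while replacing the Perron-vector estimate of Theorem \ref{thm:irregperron} by the strictly stronger bound available in the simple $d$-regular setting. As remarked at the start of Section \ref{sec:highdeg}, whenever $S\subsetneq V$ has $|S|\leq d$, every vertex of $S$ must be adjacent to at least one vertex outside $S$: in a simple graph, each vertex has $d$ distinct neighbors, at most $|S|-1<d$ of which can lie inside $S$. Consequently the maximum entry of the $\ell_2$-normalized Perron vector $\psi_S$ of $\tilde A_S$ is automatically attained at a boundary vertex $u\in S$, and pigeonhole gives $\psi_S(u)\geq 1/\sqrt{|S|}$. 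This upgrades the bound $\psi_S(u)\geq 1/(\Delta^{5/2}\lambda_1(\tilde A_S)|S|^{5/2})$ of Theorem \ref{thm:irregperron}, and is the sole input that differs from the proof of Theorem \ref{thm:closed}.

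First I would reduce the support estimate to an upper bound on $\lambda_1(\tilde A_S)$. The number of closed walks of length $2k$ from $x$ whose support lies in $S$ is $(A_S^{2k})_{xx}\leq |S|\lambda_1(A_S)^{2k}$, while the total number of closed walks of length $2k$ from $x$ is $(A^{2k})_{xx}\geq d^{2k}/n$ for a connected $d$-regular graph. Union-bounding over the at most $(ed)^s$ connected subsets $S\ni x$ of size at most $s$ (standard subtree enumeration) yields
\begin{equation*}
\Pr(\support(\gamma)\leq s\mid X_{2k}=X_0)\;\leq\; s(ed)^s n\cdot \max_{S\ni x,\,|S|\leq s}\lambda_1(\tilde A_S)^{2k}.
\end{equation*}

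Next I would establish a quantitative spectral gap $\lambda_1(\tilde A_S)\leq 1-c/s^3$ for every connected $S\subsetneq V$ with $|S|\leq s\leq d/2$, by running the iterated-extension scheme of Section \ref{sec:closed} with $\psi_S(u)\geq 1/\sqrt{|S|}$ as the Perron input. Starting from $S_0=S$, at each step select the boundary vertex $u_i$ of $S_i$ with largest Perron entry and a neighbor $v_i\in V\setminus S_i$, set $S_{i+1}=S_i\cup\{v_i\}$, and use the test vector $\psi_{S_i}+\varepsilon e_{v_i}$ together with the improved bound on $\psi_{S_i}(u_i)$ to force an appropriate per-step gain in $\lambda_1(\tilde A_{S_i})$. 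Invoking the combinatorial redistribution bookkeeping of Section \ref{sec:closed} to control how the Perron mass relocates under each augmentation, and summing the per-step gains across the extension sequence, yields the desired $c/s^3$ gap.

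Combining the two steps and using the hypothesis $s\leq \tfrac{1}{8}(k/\log d)^{1/4}$ to absorb the prefactor $s(ed)^s n$ into $\exp(-2ck/s^3)$ gives the claimed bound $\exp(-k/(100s^3))$ after adjusting constants; the side condition $s\leq d/2$ is exactly what enables the pigeonhole step behind the improved Perron bound. The main obstacle will be the second step: while the new Perron input is immediate from pigeonhole, obtaining exactly the exponent $3$ in $1/s^3$ from iterated augmentation requires carefully re-running the combinatorial redistribution argument of Section \ref{sec:closed} with this new input and verifying that the bookkeeping for Perron mass relocation remains tight when one substitutes $\psi_S(u)^2\geq 1/|S|$ in place of the much smaller quantity coming from Theorem \ref{thm:irregperron}.
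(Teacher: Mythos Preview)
Your identification of the improved Perron input---that every vertex of $S$ is a boundary vertex once $|S|\le d$, so one may take $u=\arg\max_w\psi_S(w)$ and get $\psi_S(u)\ge 1/\sqrt{|S|}$---is exactly what the paper uses, and feeding this into Lemma~\ref{lem:test} gives the correct per-step eigenvalue gain. The gap is in your Step~1.

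Your reduction introduces a factor of $n$ via the crude lower bound $(A^{2k})_{xx}\ge d^{2k}/n$, and this factor cannot be absorbed under the stated hypotheses. The only constraint on $k$ is $2k<n$, so $k$ may be far smaller than $\log n$; for instance with $d$ fixed, $n=d^{k^2}$, and any $s\ge 2$ in the allowed range, your estimate reads roughly $n\cdot d^{s}\cdot\lambda_1(\tilde A_S)^{2k}\le n\cdot d^{s}\cdot(s/d)^{2k}$, which is $\gg 1$. (Incidentally, you do not need iterated augmentation to obtain $\lambda_1(\tilde A_S)\le 1-c/s^3$: the much stronger bound $\lambda_1(\tilde A_S)\le (s-1)/d\le 1/2$ is immediate for a simple graph with $|S|\le d/2$. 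The obstruction is not the eigenvalue bound but the denominator you chose.)

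The paper sidesteps this entirely by never comparing to the full return count. It proves the \emph{relative} inequality $\P_x(W^{2k,s})\le\exp(-k/(100s^3))\,\P_x(W^{2k,2s})$, which gives the conditional bound since $W^{2k,2s}\subseteq W^{2k}$. The mechanism: extend the maximizing $S\in\Gamma_x^s$ to a set $T$ of size $2s$ with $\lambda_1(A_T)\ge\lambda_1(A_S)(1+1/(10\lambda_1(A_S)^3))$ via the augmentation, and then---this is the step your outline is missing---use the path-concatenation inequality \eqref{eqn:xzwalks} to bound $\lambda_1(\tilde A_T)^{2k}$ above by $2s\,\Delta^{4s}\,e_x^T\tilde A_T^{2k}e_x\le 2s\,\Delta^{4s}\,\P_x(W^{2k,2s})$, losing only a factor $d^{O(s)}$ rather than $n$. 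There is no ``combinatorial redistribution bookkeeping'' in Section~\ref{sec:closed}; the whole argument is Lemma~\ref{lem:test} iterated $s$ times together with the comparison \eqref{eqn:xzwalks}.
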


The proofs of both theorems appear in Appendix \ref{sec:highdegproofs}

\subsection{Related work}\label{sec:related}

\paragraph{Eigenvalue Multiplicity.} 
Despite the straightforward nature of the question, relatively little is known about eigenvalue multiplicity of general graphs. As discussed in \cite{equiangular}, if one assumes that $G$ is a bounded degree expander graph, then the bound of Theorem \ref{thm:zhao} can be improved to $O(n/\log n)$. On the other hand, if $G$ is assumed to be a Cayley graph of bounded doubling constant $K$ (indicating non-expansion), then \cite{lee2008eigenvalue} show that the multiplicity of the second eigenvalue is at most $\exp(\log^2 K)$. In the context of Cayley graphs, one interesting new implication of Theorem \ref{thm:highdegmult} is that all Cayley graphs of degree $O(\exp(\log^{1/2-\delta}n))$ have second eigenvalue multiplicity $O(n/\log^{\delta/2}n)$.

%Tao and Vu \cite{TV} showed that Erd\"os-R\'enyi graphs asymptotically almost surely have simple spectrum; this was recently extended to the sparse setting by Luh and Vu \cite{luh2018sparse}. Diametrically opposed to random graphs are graphs with algebraic structure. 
Distance regular graphs of diameter $D$ have exactly $D+1$ distinct eigenvalues (see \cite{Godsil} 11.4.1 for a proof). However, besides the top eigenvalue (which must have multiplicity 1), generic bounds on the multiplicity of the other eigenvalues are not known. As expanding graphs have diameter $\Theta(\log_{d}n)$, the average multiplicity of eigenvalues besides $\lambda_1$ for expanding distance regular graphs is $\Theta(n/\log_d n)$. It is tempting to see this as a hint that the multiplicity of the second eigenvalue could be $\Omega(n/\log_d n)$.

Sublinear multiplicity does not necessarily hold for eigenvalues in the interior of the spectrum even assuming bounded degree.  In particular, Rowlinson has constructed connected $d-$regular graphs with  an eigenvalue of multiplicity at least $n(d-2)/(d+2)$ \cite{Row} for constant $d$.

\paragraph{Higher Order Cheeger Inequalities.} The results of \cite{louis2012many, lee2014multiway} imply that if a $d-$regular graph $G$ has a second eigenvalue multiplicity of $m$, then its vertices can be partitioned into $\Omega(m)$ disjoint sets each having edge expansion $O(\sqrt{d(1-\lambda_2)\log m})$. Combining this with the observation that a  set cannot have expansion less than the reciprocal of its size shows that $m=O(n/\polylog(n))$ whenever  $1-\lambda_2(\tilde A_G) \le 1/\log^{c}n$ for any $c>1$, i.e., the graph is sufficiently non-expanding. Our main theorem may be interpreted as saying that this phenomenon persists for all graphs. 

\paragraph{Support of Walks.} 
There are as far as we are aware no known lower bounds for the support of a random closed walk of fixed length in a general graph (or even Cayley graph). It is relatively easy to derive such bounds for bounded degree graphs if the length of the walk is sufficiently larger than the mixing time of the simple random walk on the graph; the key feature of Theorem \ref{thm:closed}, which is needed for our application, is that the length of the walk can be taken to be much smaller.

The support of open walks (namely removing the condition that the walk ends at the starting point) is better understood. There are Chernoff-type bounds on the size of the support of a random walk based on the spectral gap \cite{gillman1998chernoff,kahale1997large}. Such bounds and their variants are
an important tool in derandomization.

\paragraph{Entries of the Perron Vector.} There is a large literature concerning the magnitude of the entries of the Perron eigenvector of a graph --- see \cite[Chapter 2]{stevanovic2014spectral} for a detailed discussion of results up to 2014. Rowlinson showed sufficient conditions on the Perron eigenvector for which changing the neighborhood of a vertex increases the spectral radius \cite{rowlinson1990more}. Cvetkovi{\'c}, Rowlinson, and Simi{\'c} give a condition which, if satisfied, means a given edge swap increases the spectral radius \cite{cvetkovic1993study}. Cioab\u{a} showed that for a graph of maximum degree $\Delta$ and diameter $D$,  $\Delta-\lambda_1>1/nD$ \cite{cioabua2007spectral}. Cioab\u{a}, van Dam, Koolen, and Lee then showed that $\lambda_1\ge (n-1)^{1/D}$ \cite{cioabua2010lower}. The results of \cite{van2011decreasing} prove a lemma similar to Lemma \ref{lem:test}, giving upper and lower bounds on the change in spectral radius from the deletion of edges. However, their result does not quite imply Lemma \ref{lem:test}, and we prove a slightly different statement.

\subsection{Notation}
All logarithms are base $e$ unless noted otherwise.

 \paragraph{Electrical Flows.}
 We use $\reff_{H}(\cdot,\cdot)$ to denote the effective resistance between two vertices in $H$, viewing each edge of the graph as a unit resistor. See e.g. \cite{doyle1984random} or \cite[Chapter IX]{MGT} for an introduction to electrical flows and random walks on graphs.

\paragraph{Graphs.}
For a matrix $M$, we use $M_S$ to denote the principal submatrix of $M$ corresponding to the indices in $S$. Consider a graph $G=(V,E)$ and a subset $H\subset V$. Let $P:=AD^{-1}$ be the transition matrix of the simple random walk matrix on $G$, where $A$ is the adjacency matrix and $D$ is the diagonal matrix of degrees. We will  also use the normalized adjacency matrix $\tilde A:= D^{-1/2}AD^{-1/2}$. Note that $P$ and $\tilde A$ are similar, and that $\tilde A$ is symmetric. $P_S$ and $\tilde A_S$ are submatrices of $P$ and $\tilde A$; they are not the  transition matrices and normalized adjacency matrices of the induced subgraph on $S$. Note $P_S$ and $\tilde A_S$ are also similar.

\paragraph{Perron Eigenvector.}
We use $\psi_S$ to denote the $\ell_2$-normalized eigenvector corresponding to
$\lambda_1(\tilde A_S)$, which is a simple eigenvalue if $S$ is connected. Note that for connected $S$, $\psi_S$ is strictly positive
 by the Perron-Frobenius theorem.

A simple graph refers to a graph without multiedges or self-loops. We assume $\Delta\ge 2$ for all connected regular graphs, since otherwise the graph is just an edge, so $\log \Delta>0$.

\section{Lower Bounds on the Perron Eigenvector}\label{sec:perron}
In this section we prove Theorem \ref{thm:irregperron}, which is a direct consequence of the following slightly more refined result. In a graph $G=(V,E)$, define the boundary of $S$ as the set of vertices in $S$ adjacent to $V\backslash S$ in $G$.
\begin{theorem}[Large Perron Entry]\label{lem:electric}
Let $G=(V,E)$ be a connected graph of maximum degree $\Delta$ and $S\subsetneq V$ such that the induced subgraph on $S$ is connected. Then there is a vertex $u\in S$ on the boundary of $S$ such that
\begin{equation}\label{eqn:psilarge}\psi_S(u)/\psi_S(t)\geq 1/(\Delta^{5/2}\lambda_1(\tilde A_S)|S|^2)\end{equation}
where $ t=\arg \max_{w\in S} \psi_S(w)$.
\end{theorem}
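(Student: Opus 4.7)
The plan is to chain two inequalities, one derived from the Perron eigenvalue equation $\tilde A_S\psi_S=\lambda_1\psi_S$ on the boundary of $S$, and one derived from the effective-resistance interpretation of the Green's function of the simple random walk on $G$ absorbed upon leaving $S$; the two are then composed and a Cauchy--Schwarz cancellation delivers the claimed ratio bound.

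\emph{Step 1: a boundary identity.} I would multiply the eigenvalue equation by $\sqrt{\deg(x)}$ and sum over $x\in S$. Since the only edges ``missing'' from $\tilde A_S$ relative to $\tilde A$ are those leaving $S$, a short calculation produces
\[
(1-\lambda_1)\sum_{y\in S}\sqrt{\deg(y)}\,\psi_S(y) \;=\; \sum_{x\in\partial S}\frac{d_\partial(x)}{\sqrt{\deg(x)}}\psi_S(x),
\]
where $d_\partial(x)$ is the number of $G$-edges from $x\in S$ to $V\setminus S$. Bounding the right-hand side by $\sqrt{\Delta}\,|\partial S|\max_{u\in\partial S}\psi_S(u)$ and the left-hand side using $\sum_y\sqrt{\deg(y)}\psi_S(y)\ge\|\psi_S\|_1$ yields $\max_{u\in\partial S}\psi_S(u)\ge (1-\lambda_1)\|\psi_S\|_1/(\sqrt{\Delta}\,|\partial S|)$.

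\emph{Step 2: a spectral gap estimate from hitting times.} Let $G_S(t,t)$ be the expected number of visits to $t$ made by the simple random walk on $G$ started at $t$ and absorbed upon hitting $V\setminus S$. Since every eigenvalue of $\tilde A_S$ is strictly below $1$, the spectral expansion
\[
G_S(t,t)=\sum_j\frac{v_j(t)^2}{1-\lambda_j(\tilde A_S)}
\]
consists of positive summands, so $G_S(t,t)\ge\psi_S(t)^2/(1-\lambda_1)$. On the other hand, $G_S(t,t)$ equals the reciprocal of the probability of exiting before returning to $t$, which by the classical electrical-flow identity equals $\deg(t)\cdot\reff(t,V\setminus S)$ in the graph with $V\setminus S$ contracted to a single vertex. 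Because $S$ is connected, any path from $t$ to some $v\in\partial S$ inside $S$ followed by one boundary edge certifies $\reff(t,V\setminus S)\le|S|$; hence $G_S(t,t)\le\Delta|S|$ and $1-\lambda_1\ge\psi_S(t)^2/(\Delta|S|)$.

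\emph{Step 3: chaining.} Substituting Step~2 into Step~1 gives $\max_{u\in\partial S}\psi_S(u)\ge\psi_S(t)^2\,\|\psi_S\|_1/(\Delta^{3/2}|S|\,|\partial S|)$. The key cancellation is the Cauchy--Schwarz inequality $\psi_S(t)\cdot\|\psi_S\|_1\ge\|\psi_S\|_2^2=1$, which absorbs one factor of $\psi_S(t)$, leaving
\[
\frac{\max_{u\in\partial S}\psi_S(u)}{\psi_S(t)} \;\ge\; \frac{1}{\Delta^{3/2}|S|\,|\partial S|}\;\ge\;\frac{1}{\Delta^{3/2}|S|^2},
\]
which is at least as strong as the claimed bound (the $\lambda_1$ in the denominator of the stated inequality and the extra $\Delta$ factor merely give slack). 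I expect the main obstacle to be keeping the exponents sharp: a naive combination of Steps~1 and~2 leaves an extraneous $\sqrt{|S|}$ or $\psi_S(t)^2$ floating around, and it is only the identity $\psi_S(t)\|\psi_S\|_1\ge 1$ that produces the $|S|^2$ exponent rather than $|S|^{5/2}$. Matching the precise constants in the stated bound may require a slightly different pairing in Step~1 (e.g., against a vector tuned to $\psi_S$) or a more delicate accounting of the degree factors appearing in the electrical-flow identity, but the overall strategy proceeds as above.
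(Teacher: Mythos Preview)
Your proof is correct and in fact yields the stronger inequality $\max_{u\in\partial S}\psi_S(u)/\psi_S(t)\ge 1/(\Delta^{3/2}|S|^2)$, from which the stated bound follows since $\lambda_1(\tilde A_S)\le 1$.  Each step checks out: the boundary identity in Step~1 is exactly what one gets by testing the eigenvalue equation against $D_S^{1/2}\mathbf{1}_S$; in Step~2 the spectral expansion is valid because $S\subsetneq V$ and $G$ connected force $|\lambda_j(\tilde A_S)|<1$ (any eigenvector with eigenvalue $\pm 1$ would extend by zeros to a null vector of $I\mp\tilde A$, impossible since those null spaces are supported on all of $V$); the identity $G_S(t,t)=\deg_G(t)\,\reff_G(t,V\setminus S)$ is the standard escape--probability formula; and in Step~3 the inequality $\psi_S(t)\|\psi_S\|_1\ge\|\psi_S\|_2^2$ (really $\sum_y\psi_S(y)^2\le\psi_S(t)\sum_y\psi_S(y)$, not Cauchy--Schwarz per se) gives the final cancellation.

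The paper's argument is genuinely different.  There the boundary $B$ is contracted to a single vertex $s$, and an electrical--flow argument locates a vertex $x\in S$ adjacent to $B$ with $\P_x(\tau_t<\tau_B)\ge 1/(\Delta|S|^2)$; the ratio $\psi_S(x)/\psi_S(t)$ is then interpreted as a limit of survival--probability ratios for the (lazy) walk, bounded below by conditioning on first hitting $t$; finally one moves from $x$ to a boundary neighbour $u$ via the eigenvector equation, at the cost of the factor $\Delta\lambda_1$.  Your route is more algebraic and global: rather than identifying a particular near--boundary vertex, you sum the eigenvalue equation over $S$ to expose the gap $1-\lambda_1$, and then control that gap via the Green's function at $t$.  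The gain is a cleaner argument and a bound sharper by a factor of $\Delta\lambda_1$; the paper's approach, on the other hand, is more constructive (it pinpoints a specific $u$ adjacent to a specific $x$) and makes the probabilistic mechanism---that a walk from near the boundary has a decent chance of reaching $t$ before escaping---explicit.  Both proofs ultimately lean on the same effective--resistance bound $\reff(t,V\setminus S)\le|S|$, but they deploy it at different points.
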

%In the special case where $G$ is $d$-regular, $P=\tilde A=\frac1dA_G$. Therefore this implies that for any irregular graph, there is a vertex of non-maximal degree that has Perron eigenvector entry at least that of (\ref{eqn:psilarge}).

At a high level, the proof proceeds as follows.
First, we show that there exists a vertex $x\in S$ adjacent to the boundary of $S$ such that a random walk started at $x$ is somewhat likely to hit $t$ before it hits the boundary of $S$. Second, we express the ratio of $D^{1/2}_S\psi_S(x)$ and $D^{1/2}_S\psi_S(t)$ as a limit as $k\to \infty$ of the ratio $\P{Y^k_x}/\P{Y_t^k}$, where $Y^k_v$ is the event that the simple random walk started at $v$ remains in $S$ for $k$ steps; we bound this ratio from below using the hitting time estimate from the first step. 
Third, by the eigenvector equation the ratio of the entries of an eigenvector at two neighboring vertices is bounded. Hence, $x$ is adjacent to some vertex $u$ on the boundary of $S$ satisfying the theorem.

\begin{proof}
    Write $S=M\sqcup B$, where $B$ is the boundary of $S$ and $M=S\setminus B$. If $t\in B$ then we are done, so assume not. 
	Let $\P^G_x(\cdot)$ denote
	the law of the simple random walk (SRW) $(X_i)_{i=0}^\infty$ on $G$ started at $X_0=x$, and for any subset $T\subset V$, let
	$\T_T :=\{\min i: X_i\in T\}$
	denote the hitting time of the SRW to that subset; if $T=\{u\}$ is a singleton we will simply write $\T_u$. \\
	
    \noindent {\em Step 1.} We begin by showing that there is a vertex $x\in M$ adjacent to $B$ for
	which the random walk started at $x$ is reasonably likely to hit $t$ before $B$.
	To do so, we use the well-known connection between hitting probabilities in random walks
	and electrical flows.
	Define a new graph $K=(V'=V\setminus B\cup\{s\},E')$ by contracting all vertices in $B$ to a single
	vertex $s$. Let $f:V'\rightarrow [0,1]$ be the vector of voltages in the 
	electrical flow in $K$ with boundary conditions $f(s)=0, f(t)=1$, regarding every edge as a unit resistor. 
	By Ohm's law, the current flow from $s$ to $t$ is equal to $1/\reff_{K}(s,t)$. We have the crude upper bound $$\reff_{K}(s,t)\le\mathrm{distance}_K(s,t)\le |S|,$$
	since $S$ is connected,
	so the outflow of current from $s$ is at least $1/|S|$. By Kirchhoff's current law, there must be a flow 
	of at least $1/(|S|\deg_{K}(s))$ on at least one edge $(s,x)\in E'$. By Ohm's law again, for this particular $x\in V'$ we must have
	\begin{equation}\label{eqn:ohm} f(x)\ge \frac{1}{|S|\deg_{K}(s)} = \frac{1}{|S||\partial_G B|}\ge \frac{1}{\Delta |S|^2},\end{equation}
	where $\partial_G B$ denotes the edge boundary of $B$ in $G$.
	Appealing to e.g. \cite[Chapter IX, Theorem 8]{MGT}, this translates to the probabilistic bound
	\begin{equation}\label{eqn:hittingst} \P_x^G(\T_t<\T_B)=\P_x^{K}(\T_t<\T_s)  = f(x)\ge  \frac{1}{\Delta |S|^2}.\end{equation}
	
	Finally, since $f(s)=f(y)=0$ for every $y\in V\setminus S$, we must in fact have $x\in M$.\\

\begin{figure}
    \centering
    \includegraphics[height=2in]{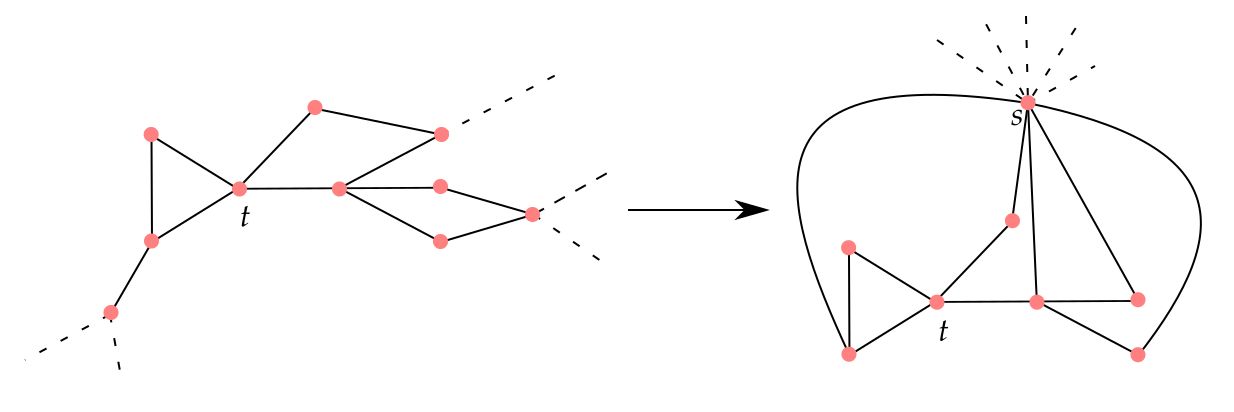}
    \caption{ In Step 1 of the proof of Theorem \ref{lem:electric}, we lower bound the probability that a random walk started at a certain vertex $x$ adjacent to $B$ reaches $t$ before reaching $B$. We do this by contracting $B$ to a vertex $s$, then lower bounding the current from $s$ to $t$, which establishes the existence of the desired $x$. The left graph in the figure is $G$ and the right graph is the contracted graph $K$, with the dotted lines indicating edges leaving the set of interest $S=M\sqcup B$. } 
    \label{fig:contractst}

\end{figure}
		\noindent {\em Step 2.} We now use \eqref{eqn:hittingst} to show that $\psi_S(x)$ is large. Because $\tilde A_S = D^{-1/2}_SP_SD^{1/2}_S$, the top eigenvector of $P_S$ is $D^{1/2}_S\psi_S/\|D^{1/2}_S\psi_S\|$. Let $P':(P+I)/2$ denote the lazy random walk\footnote{This modification is only to ensure non-bipartiteness; if $S$ is not bipartite we may take the simple random walk} on $G$, and to ease notation let $\P'_x(\cdot):={\P'_x}^{G}(\cdot)$ denote the law of the lazy random walk on $G$ started at $x$.
	Note that the eigenvectors of $P_S$, as well as $\P_x(\T_t<\T_B)$, do not change when passing to $P'_S$.

For the lazy random walk, the Perron-Frobenius theorem implies that
	$$\frac{ (D^{1/2}_S\psi_{S})(w)}{\|D^{1/2}_S\psi_{S}\|}= \lim_{k\rightarrow \infty} \frac{\one_S^T {P'_{S}}^k e_w}{\|\one_S^T {P'_{S}}^k\|},$$
    for every $w\in S$, where $\one_S\in\R^{S}$ is the all ones vector.
We further have
	$$ \one_S^T {P'_{S}}^k e_w = \P'_w(\tau_{V\backslash S}>k),$$ namely the probability a random walk of length $k$ starting at $w$ stays in $S$.
	
	We are interested in the ratio
	\begin{equation}\label{eqn:limratio}
	 \frac{(D^{1/2}_S\psi_S)(x)}{(D^{1/2}_S\psi_S)(t)}=\lim_{k\rightarrow\infty}\frac{\P'_x(\tau_{V\backslash S}>k)}{\P'_t(\tau_{V\backslash S}>k)}.
	\end{equation}
	Fix an integer $k>0$. The numerator of \eqref{eqn:limratio} is bounded as
	\begin{align}
	\P'_x(\tau_{V\backslash S}>k)&\geq \P'_x(\tau_{V\backslash S}>k|\T_t<\T_B)\P'_x (\T_t<\T_B)\nonumber\\
						&\ge \frac{1}{\Delta |S|^2}\P'_x(\tau_{V\backslash S}>k|\T_t<\T_B)\qquad\textrm{by \eqref{eqn:hittingst}}\nonumber \\
						&\ge \frac{1}{\Delta |S|^2}\sum_{\theta=0}^{k-1}\P'_x(\tau_{V\backslash S}>k|\T_t=\theta,\T_t<\T_B)\P'_x(\T_t=\theta|\T_t<\T_B)\\
						&=\frac{1}{\Delta |S|^2} \sum_{\theta=0}^{k-1}\P'_t(\tau_{V\backslash S}>{k}-\theta)\P'_x(\T_t=\theta|\T_t<\T_B)\nonumber\\
						&\ge\frac{1}{\Delta |S|^2} \sum_{\theta=0}^{k-1}\P'_t(\tau_{V\backslash S}>{k})\P'_x(\T_t=\theta|\T_t<\T_B). \label{eqn:thetasum}
	\end{align}
    Observe that $\E'_x \T_B<\infty$
    since $G$ is connected.
	Thus, 
		\begin{align*}
			\sum_{\theta=0}^{k-1} \P'_x (\T_t=\theta | \T_t<\T_B) &= 1-\P'_x(\T_t\ge k|\T_t<\T_B) \nonumber\\
			&\ge 1- \frac{\P'_x(\T_B\ge k)}{\P'_x(\T_t<\T_B)}\nonumber\\
			&\ge 1-\frac{\E'_x \T_B}{k}\cdot \Delta |S|^2\quad\textrm{by Markov and \eqref{eqn:hittingst}}.
		\end{align*}
	Combining this bound with \eqref{eqn:thetasum}, we have
	$$ \P'_x(\tau_{V\backslash S}>k) \ge \frac{1}{\Delta |S|^2}\left(1-\frac{\E'_x \T_B}{k}\cdot \Delta |S|^2\right)\P'_t(\tau_{V\backslash S}>k)$$

	Taking the limit as $k\rightarrow\infty$ in \eqref{eqn:limratio} yields
	$$\frac{(D^{1/2}_S\psi_S)(x)}{(D^{1/2}_S\psi_S)(t)}\ge \frac{1}{\Delta |S|^2}.$$\\

\noindent {\em Step 3.}	Since $x$ is adjacent to $B$, we can choose a $u\in B$ adjacent to $x$. The eigenvector equation and nonnegativity of the Perron vector now imply
	$\Delta\lambda_1(A_S)\psi_S(u) \ge \psi_S(x)$, whence 
	\begin{equation}\label{eqn:transitionperron}
	(D^{1/2}_S\psi_S)(u)\ge \frac{1}{\lambda_1(\tilde A_S)\Delta^2 |S|^2}(D^{1/2}_S\psi_S)(t).
	\end{equation}
	
	Therefore, as $D$ is a diagonal matrix, and the entries of $D$ range from $1$ to $\Delta$, it must be the case that
	\[
	\psi_S(u)\ge \frac{1}{\lambda_1(\tilde A_S)\Delta^{5/2} |S|^2}\psi_S(t).
	\]
\end{proof}
\begin{remark} As the proof shows, the right-hand side of \eqref{eqn:psilarge} may be replaced with $1/\Delta^{3/2}\lambda_1(\tilde A_S)|\partial_G B| R$
	where $B$ is the boundary of $S$ in $G$ and $R$ is the maximum effective resistance between two vertices in $S$.
\end{remark}
\begin{proof}[Proof of Corollary \ref{cor:irregperron1}]
    Given an irregular graph $H$, construct a $\Delta-$regular graph $G$ containing $H$ as an induced subgraph (it is trivial to do this if we allow $G$ to be a multigraph). Repeating the above proof on $G$ with $S=H$ and observing that $D_S^{1/2}$ is a multiple of the identity since $G$ is regular, \eqref{eqn:transitionperron} yields the desired conclusion.
\end{proof}

\section{Support of Closed Walks}\label{sec:closed} \newcommand{\shrinkage}{\left(1+\frac{5}{128\Delta^7s^4}\right)}

\newcommand{\stwo}{\min\left\{\frac{1}{8}\left(\frac{k}{\log d}\right)^{1/4}, \frac{d-h}{2}\right\}}

In this section we prove Theorem \ref{thm:closed}, which is an immediate consequence of the following slightly stronger result. Let $W^{2k, s}$ denote the event a simple random walk of length $2k$ has support at most $s$ and ends at its starting point. 

\begin{thm}[Implies Theorem \ref{thm:closed}]\label{thm:cyclesup} If $G$ is connected and of maximum degree $\Delta$ on $n$ vertices, then for every vertex $x\in G$ and $k<n/2$,
\begin{equation}\label{eqn:lowcycles}
\P_x(W^{2k,s})\leq \expone\P_x(W^{2k,2s})\qquad\textrm{for}\quad s\leq \sone.
\end{equation}

\end{thm}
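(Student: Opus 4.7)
The plan is to iteratively apply Theorem~\ref{thm:irregperron} to enlarge any small connected vertex set $S$ (with $x \in S$ and $|S| \le s$) to a connected set $\hat S \supseteq S$ of size at most $2s$ whose principal submatrix $\tilde A_{\hat S}$ has substantially larger Perron value, and then to convert this spectral gain into a gap between the return probabilities $(\tilde A_S^{2k})_{xx}$ and $(\tilde A_{\hat S}^{2k})_{xx}$.

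Concretely, for any connected $S \subsetneq V$, Theorem~\ref{thm:irregperron} furnishes a boundary vertex $u \in S$ with $\psi_S(u) \ge 1/(\Delta^{5/2}\lambda_1(\tilde A_S)|S|^{5/2})$. Pick $u' \in V \setminus S$ adjacent to $u$, set $S' := S \cup \{u'\}$, and test the Rayleigh quotient of $\tilde A_{S'}$ against $\psi_S + \alpha e_{u'}$ (with $\psi_S$ extended by $0$ on $u'$); optimizing over $\alpha$ yields
\[
\lambda_1(\tilde A_{S'}) \ge \lambda_1(\tilde A_S) + \frac{\psi_S(u)^2}{d_u d_{u'}\,\lambda_1(\tilde A_S)} \ge \lambda_1(\tilde A_S)\bigl(1 + 1/(\Delta^7|S|^5)\bigr),
\]
using $\lambda_1(\tilde A_S) \le 1$ and $d_u d_{u'} \le \Delta^2$. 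Iterating $s$ times starting from $S_0 := S$ produces a chain $S_0 \subsetneq \cdots \subsetneq S_s =: \hat S$ with $|\hat S| \le 2s$ and, since $\log(1+z) \ge z/2$ for $z \in (0,1]$,
\[
\lambda_1(\tilde A_{\hat S})^{2k}/\lambda_1(\tilde A_{S_0})^{2k} \ge \bigl(1 + 1/(\Delta^7(2s)^5)\bigr)^{2ks} \ge \exp\!\bigl(k/(32\Delta^7 s^4)\bigr).
\]

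To convert the spectral gain to walk probabilities, note that $P_S = D_S^{1/2}\tilde A_S D_S^{-1/2}$ is a diagonal similarity, so $(P_S^{2k})_{xx} = (\tilde A_S^{2k})_{xx}$, which equals the probability that a simple random walk started at $x$ stays in $S$ and returns to $x$. The spectral decomposition of $\tilde A_S$ yields the trivial upper bound $(\tilde A_S^{2k})_{xx} \le \lambda_1(\tilde A_S)^{2k}$ and the Perron lower bound $(\tilde A_{\hat S}^{2k})_{xx} \ge \psi_{\hat S,1}(x)^2\,\lambda_1(\tilde A_{\hat S})^{2k}$. Propagating the eigenvector equation $\lambda_1\psi(v) = \sum_{w \sim v}\tilde A_{vw}\psi(w)$ along a shortest path in $\hat S$ from $x$ to the Perron-maximizing vertex gives $\psi_{\hat S,1}(x) \ge 1/(\sqrt{|\hat S|}\,\Delta^{2s})$. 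Finally, bounding $\P_x(W^{2k,s}) \le \sum_{S \ni x,\,|S|\le s,\,S\text{ connected}}(\tilde A_S^{2k})_{xx} \le N_s\lambda_1(\tilde A_{S^*})^{2k}$ (where $S^*$ maximizes $\lambda_1$ in the sum and $N_s \le (e\Delta)^s$ by a standard enumeration of connected subgraphs containing a fixed vertex), letting $\hat S^*$ be the iterated extension of $S^*$, and using $\P_x(W^{2k,2s}) \ge (\tilde A_{\hat S^*}^{2k})_{xx}$, we obtain
\[
\frac{\P_x(W^{2k,s})}{\P_x(W^{2k,2s})} \le \frac{N_s}{\psi_{\hat S^*,1}(x)^2}\exp\!\bigl(-k/(32\Delta^7 s^4)\bigr).
\]
The hypothesis $s \le \sone$ is equivalent to $s\log\Delta \le k/(1024\Delta^7 s^4)$, which absorbs the $O(s\log\Delta)$ logarithmic prefactor and produces the claimed $\exp(-k/(65\Delta^7 s^4))$.

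The delicate point is the conversion from Perron eigenvalues to return probabilities, since $(\tilde A_S^{2k})_{xx}$ depends on the Perron entry at the specific starting vertex $x$, which can be as small as $\Delta^{-O(s)}$. Accommodating this loss is precisely what fixes the relationship between $s$ and $k$ in the hypothesis: the exponential Perron gain $\exp(k/(\Delta^7 s^4))$ must dominate both the enumeration factor $(e\Delta)^s$ and the inverse Perron entry $\Delta^{O(s)}$, which forces $s^5\Delta^7\log\Delta = O(k)$.
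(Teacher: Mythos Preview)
Your approach is essentially the paper's: enumerate connected $s$-sets through $x$, pick the maximizer $S^*$, iteratively enlarge it to $\hat S^*$ of size $2s$ using Theorem~\ref{thm:irregperron} plus a rank-one Rayleigh perturbation (this is exactly the content of Lemmas~\ref{lem:test} and~\ref{lem:increase}), and then balance the $\exp(\Theta(s\log\Delta))$ losses against the spectral gain $\exp(\Theta(k/(\Delta^7 s^4)))$.

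The one place you differ is in lower-bounding $(\tilde A_{\hat S}^{2k})_{xx}$: you use the spectral expansion $(\tilde A_{\hat S}^{2k})_{xx}\ge \psi_{\hat S}(x)^2\lambda_1(\tilde A_{\hat S})^{2k}$ and then propagate the eigenvector equation along a path to get $\psi_{\hat S}(x)\ge \Delta^{-2s}/\sqrt{2s}$, whereas the paper instead concatenates a physical path $x\to z$ to relate $e_x^T\tilde A_{\hat S}^{2k}e_x$ to $e_z^T\tilde A_{\hat S}^{2k-4s}e_z$ and then averages over $z$ via the trace. Both routes yield the same $\Delta^{-O(s)}$ loss; yours is arguably a bit cleaner.

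One local inaccuracy to fix: optimizing the Rayleigh quotient does \emph{not} give $\lambda_1(\tilde A_{S'})\ge \lambda_1+\psi_S(u)^2/(d_ud_{u'}\lambda_1)$; it gives $\tfrac12\bigl(\lambda_1+\sqrt{\lambda_1^2+4c^2}\bigr)$ with $c=\psi_S(u)/\sqrt{d_ud_{u'}}$, which is strictly smaller. You need to subtract a second-order term and absorb it using $\psi_S(u)^2/\lambda_1^2\le \Delta^2$ (as the paper does in Lemma~\ref{lem:increase}); this costs a constant factor in the multiplicative gain and is why the final exponent is $1/65$ rather than the $1/32$ your sketch suggests. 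With that correction (and a careful check that $|\hat S|\le 2s<n$ throughout, which follows from $k<n/2$), your argument goes through.
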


The proof requires a simple lemma lower bounding the increase in the Perron value of a subgraph upon adding a vertex in terms of the Perron vector.
\begin{lemma}[Perturbation of $\lambda_1$] \label{lem:test} Take the normalized adjacency matrix $\tilde A:=D^{-1/2}AD^{-1/2}$ of a graph $G=(V,E)$ of maximum degree $\Delta$. For any $S\subsetneq V$ and vertex $u\in S$, the
submatrix which includes the subset $S'=(S\cup \{v\},E(S)\cup \{(u,v)\})$, which adds a vertex $v$ and
	the edge $(u, v)$ to $S$, satisfies
	\[\lambda_1(\tilde A_{S'})\geq\frac{1}{2}\left(\lambda_1(\tilde A_S)+\sqrt{\lambda_1(\tilde A_S)^2+\Delta^{-2}\psi_S(u)^2}\right).\]
\end{lemma}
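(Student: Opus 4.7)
The claim is a Rayleigh-quotient lower bound on $\lambda_1(\tilde A_{S'})$ in terms of $\lambda_1(\tilde A_S)$ and the $u$-th entry of the Perron vector of $\tilde A_S$. The natural plan is to test the Rayleigh quotient of $\tilde A_{S'}$ against a two-parameter family of vectors built from $\psi_S$ and the indicator of the new vertex $v$, and then note that the resulting optimization reduces to finding the top eigenvalue of an explicit $2\times 2$ symmetric matrix.

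\textbf{Key steps.} Write $\lambda := \lambda_1(\tilde A_S)$ and $b := 1/\sqrt{\deg_G(u)\deg_G(v)}$, and observe that $b \ge 1/\Delta$ since $G$ has maximum degree $\Delta$. First, I would write $\tilde A_{S'}$ in block form indexed by $S$ and $\{v\}$:
\[
\tilde A_{S'} \;=\; \begin{pmatrix} \tilde A_S & b\,e_u \\ b\,e_u^{\top} & 0 \end{pmatrix},
\]
where $e_u$ is the standard basis vector in $\mathbb{R}^S$ at $u$ (and the $(v,v)$-entry is $0$ because there is no self-loop at the freshly added vertex). Second, for scalars $\beta,\gamma$ with $\beta^2+\gamma^2=1$, plug the test vector $x = (\beta\,\psi_S,\ \gamma)^{\top}$ into the Rayleigh quotient. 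Using $\psi_S^{\top}\tilde A_S\,\psi_S = \lambda$ and $\|\psi_S\|_2 = 1$, a direct computation gives
\[
x^{\top}\tilde A_{S'}\,x \;=\; \lambda\beta^2 + 2b\,\psi_S(u)\,\beta\gamma, \qquad x^{\top}x = 1.
\]
Third, I would recognize the right-hand side as the Rayleigh quotient of the $2\times 2$ matrix
\[
M \;=\; \begin{pmatrix} \lambda & b\,\psi_S(u) \\ b\,\psi_S(u) & 0 \end{pmatrix}
\]
in the variable $(\beta,\gamma)$. Maximizing over unit $(\beta,\gamma)$ gives $\lambda_1(M) = \tfrac12\bigl(\lambda + \sqrt{\lambda^2 + 4 b^2\psi_S(u)^2}\bigr)$. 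By the variational principle, $\lambda_1(\tilde A_{S'}) \ge \lambda_1(M)$, and the bound $b^2 \ge \Delta^{-2}$ (actually $4b^2\ge 4\Delta^{-2}$, but we only need the weaker version) yields the statement.

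\textbf{Expected obstacles.} None substantive; the argument is essentially a one-line test-vector computation once the block decomposition is in place. The only thing to be mildly careful about is (i) that $\psi_S$ is already $\ell_2$-normalized so $x^{\top}x$ simplifies cleanly, and (ii) that when the new vertex $v$ is attached by the single edge $(u,v)$ of $G$, the entry $(\tilde A_{S'})_{u,v}$ equals $1/\sqrt{\deg_G(u)\deg_G(v)}$, which is bounded below by $1/\Delta$; any additional edges from $v$ into $S$ would only help, so the stated lower bound is not tight in general.
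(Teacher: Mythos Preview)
Your proposal is correct and follows essentially the same approach as the paper: both build a test vector of the form $(\sqrt{1-\alpha^2}\,\psi_S,\ \alpha)$ and evaluate the Rayleigh quotient of $\tilde A_{S'}$, with your $2\times 2$ reduction being a clean repackaging of the paper's explicit optimization over $\alpha$. In fact your cross-term computation retains the factor of $2$ (giving $4b^2\psi_S(u)^2$ under the radical) that the paper silently drops, so your bound is slightly sharper but both imply the stated lemma after using $b\ge 1/\Delta$.
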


\begin{proof}
The largest eigenvalue  of $\tilde A$ is at least the quadratic form {associated with} the unit vectors \[
g_\alpha(x)=\left\{\begin{array}{cc}
     \sqrt{1-\alpha^2}\psi_S(x)&x\in V  \\
     \alpha& x=v
\end{array}\right.
\]
for $0\leq \alpha\leq 1$. We have
$g_\alpha^T\tilde Ag_\alpha=(1-\alpha^2)\lambda_1(\tilde A _S)+d_{u}^{-1/2}d_v^{-1/2}\alpha\sqrt{1-\alpha^2}\psi_S(u)$, where $d_u$ is the degree of $u$ in $G$. This quantity is maximized when
\[
\alpha=\sqrt{\frac12-\frac{\lambda_1(\tilde A_S)}{2\sqrt{\lambda_1(\tilde A_S)^2+d_u^{-1}d_v^{-1}\psi_S(u)^2}}},
\]
at which
\[
g_\alpha^T\tilde Ag_\alpha=\frac{1}{2}\left(\lambda_1(\tilde A_S)+\sqrt{\lambda_1(\tilde A_S)^2+d_u^{-1}d_v^{-1}\psi_S(u)^2}\right).
\]
\end{proof}

Combining Lemma \ref{lem:test} and Theorem \ref{lem:electric} yields a bound on the increase of the top eigenvalue of the submatrix corresponding to an induced subgraph that may be achieved by adding vertices to it.
\begin{lemma}[Support Extension] \label{lem:increase}
For any connected graph $G=(V,E)$ of maximum degree $\Delta$, consider its normalized adjacency matrix $\tilde A$. For any connected subset $S\subsetneq V$ such that $2\le |S|=s<|V|/2$, there is a connected subset $T\subset V$ containing $S$ such that $|T|=2s$ and
\[
\lambda_1(\tilde A_{T})\ge \lambda_1(\tilde A_S)\left(1+\frac{5}{128\Delta^7s^4}\right).
\]
\end{lemma}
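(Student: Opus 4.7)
The plan is to grow $S$ vertex by vertex, at each step using Theorem \ref{lem:electric} to locate a favorable boundary vertex and Lemma \ref{lem:test} to convert this into a quantitative increase in $\lambda_1$. I build a nested sequence of connected subsets $S = S_0 \subsetneq S_1 \subsetneq \cdots \subsetneq S_s = T$ with $|S_i| = s+i$. Since $|T| = 2s < |V|$, each $S_i$ is a proper connected subset of $V$, so Theorem \ref{lem:electric} supplies a boundary vertex $u_i \in S_i$ with some neighbor $v_i \in V \setminus S_i$ satisfying
\[
\psi_{S_i}(u_i) \;\ge\; \frac{1}{\Delta^{5/2}\,\lambda_1(\tilde A_{S_i})\,|S_i|^{5/2}}.
\]
Setting $S_{i+1} := S_i \cup \{v_i\}$ keeps the set connected, and since all other entries of $\tilde A_{S_{i+1}}$ not present in the one-edge extension of Lemma \ref{lem:test} are nonnegative, the per-step bound of that lemma applies verbatim to $\tilde A_{S_{i+1}}$.

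For the per-step gain, write $\lambda_i := \lambda_1(\tilde A_{S_i})$ and $\eta_i := \psi_{S_i}(u_i)^2/(\Delta^2 \lambda_i^2)$. Lemma \ref{lem:test} gives $\lambda_{i+1} \ge (\lambda_i/2)(1+\sqrt{1+\eta_i})$. Since $\tilde A_{S_i}$ is a principal submatrix of $\tilde A$ we have $\lambda_i \le \lambda_1(\tilde A) = 1$; on the other hand any single edge of $S_i$ contributes a $2\times 2$ principal submatrix of $\tilde A_{S_i}$ with top eigenvalue at least $1/\Delta$, so $\lambda_i \ge 1/\Delta$. Combined with the Perron bound, this squeezes
\[
\frac{1}{\Delta^7|S_i|^5} \;\le\; \eta_i \;\le\; 1.
\]
On the range $\eta \in (0, 1]$ the elementary inequality $(1+\sqrt{1+\eta})/2 \ge 1 + \eta/5$ holds (it is equivalent to $\eta \le 5/4$ after squaring), so $\lambda_{i+1}/\lambda_i \ge 1 + 1/(5\Delta^7|S_i|^5)$.

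Telescoping and using $\prod_i(1+a_i) \ge 1+\sum_i a_i$ along with the integral estimate $\sum_{i=0}^{s-1}(s+i)^{-5} \ge \int_s^{2s} x^{-5}\,dx = 15/(64 s^4)$ gives
\[
\frac{\lambda_1(\tilde A_T)}{\lambda_1(\tilde A_S)} \;\ge\; 1 + \frac{1}{5\Delta^7}\cdot\frac{15}{64\,s^4} \;=\; 1 + \frac{3}{64\,\Delta^7 s^4} \;\ge\; 1 + \frac{5}{128\,\Delta^7 s^4},
\]
as required. The only delicate point is constant tracking: the scalar bound $(1+\sqrt{1+\eta})/2 \ge 1 + \eta/5$ and the integral estimate each lose small factors, and one has to check that the composite constant $3/64$ still beats the target $5/128$. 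Beyond that, the argument is a direct $s$-step induction built on Theorem \ref{lem:electric} and Lemma \ref{lem:test}, with no deeper obstacle.
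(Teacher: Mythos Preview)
Your proposal is correct and follows essentially the same strategy as the paper: iterate $s$ times, at each step applying Theorem~\ref{lem:electric} to find a well-placed boundary vertex and Lemma~\ref{lem:test} to quantify the gain, then sum (integrate) the increments. The only differences are cosmetic: the paper expands $\tfrac12(\lambda+\sqrt{\lambda^2+\Delta^{-2}\psi(u)^2})$ via a two-term Taylor bound to obtain an \emph{additive} increment of $1/(6\Delta^7|S_i|^5)$ and converts to a multiplicative statement at the end using $\lambda_1\le 1$, whereas you use the scalar inequality $(1+\sqrt{1+\eta})/2\ge 1+\eta/5$ on $(0,1]$ to get a multiplicative factor directly; both routes land above the target constant $5/128$.
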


\begin{proof}
	Define $\lambda_1:=\lambda_1(\tilde A_S)$ and note that $\lambda_1\geq 1/\Delta$ since $S$ contains at least one edge. As $\psi_S$ is a normalized vector with $s$ entries, $\psi_S(t)\geq1/\sqrt{s}$. Therefore $\psi_S(u)\geq 1/(\Delta^{5/2}\lambda_1 s^{5/2})$. Take $v$ to be any vertex in $V\setminus S$ that neighbors $u$ in $G$. By Lemma \ref{lem:test},

\begin{align}
\lambda_1(\tilde A_{S\cup\{v\}})&\geq\frac{1}{2}\left(\lambda_1+\sqrt{\lambda_1^2+\Delta^{-2}\psi_S(u)^2}\right)\nonumber\\
&\geq \lambda_1+\frac{\psi_S(u)^2}{4\lambda_1\Delta^2}-\frac{\psi_S(u)^4}{16\lambda_1^3\Delta^4}\nonumber\\
&\geq \lambda_1+\frac1{6\lambda_1^3\Delta^7s^5}\qquad\textrm{as $\psi_S(u)^2/\lambda_1^2 \le \Delta^2$} \nonumber\\
&\geq \lambda_1+\frac1{6\Delta^7s^5}\qquad\textrm{since $\lambda_1\le 1$.}\label{eq:onevert}
\end{align}

Assuming that $s<|V|/2$, we can iterate this process $s$ times, adding the vertices $\{v_1,\ldots v_s\}$. At each step we add the vertex $v_i$ and increase the Perron eigenvalue of $\tilde A_{S\cup\{v_1,\ldots,v_{i-1}\}}$ by at least $1/(6\Delta^7(s+i-1)^5)$. Therefore, defining $T=S\cup\{v_1,\ldots v_s\}$, we have
\[
\lambda_1(\tilde A_T)\geq \lambda_1+\frac1{6\Delta^7}\sum_{i=1}^s \frac{1}{(s+i-1)^5}\geq \lambda_1 +\frac{5}{128\Delta^7s^4},
\]
where the last inequality follows from approximating the sum with the integral. As $\lambda_1\leq 1$, this translates to the desired multiplicative bound.
\end{proof}

\begin{proof}[Proof of Theorem \ref{thm:cyclesup}] We begin by showing \eqref{eqn:lowcycles}.
Let $\Gamma_x^{s}$ be the set of connected subgraphs of $G$ with $s$ vertices containing $x$. Choose $S$ to be the maximizer of $e_x^T\tilde A_S^{2k} e_x$ among $S\in \Gamma_x^{s}$, and let $T\in \Gamma_x^{2s}$ be the extension of $S$ guaranteed by Lemma \ref{lem:increase} to satisfy
$$ \lambda_1(\tilde A_T)\ge \shrinkage \lambda_1(\tilde A_S).$$
$P^{2k}_S$ has the same diagonal entries as $\tilde A^{2k}_S$, so
$$\P_x(W^{2k,s}) \le \sum_{S'\in \Gamma_x^{s}} e_x^T\tilde A_{S'}^{2k}e_x,$$
since each walk of length $2k$ satisfying $W^{2k,s}$ is contained in at least one $S'\in \Gamma_x^{s}$. Furthermore, $|\Gamma_x^{s}|\le \Delta^{2s}$ since each subgraph of $\Gamma_x^s$ may be encoded by one of its spanning trees, which may in turn be encoded by a closed walk rooted at $x$ traversing the edges of the tree. We then have
\begin{align}
    \P_x(W^{2k,s}) &\le |\Gamma_x^s| e_x^T\tilde A_{S}^{2k} e_x\nonumber\\
                & \le \Delta^{2s} \lambda_1(\tilde A_S)^{2k}\nonumber\\
                & \le \Delta^{2s} \shrinkage^{-2k} \lambda_1(\tilde A_T)^{2k}.\label{eqn:w2ks}
\end{align}
We will  bound the right hand side in terms of $\P_x(W^{2k,2s})$.

We claim that for every $z\in T$, 
\begin{equation}\label{eqn:xzwalks} e_x^T\tilde A_T^{2k} e_x \ge \Delta^{-4s}e_z^T\tilde A_T^{2k-4s}e_z.\end{equation}
To see this, let $\pi$ be a path in $T$ of length $\ell\le 2s$ between $x$ and $z$, which must exist since $T$ is connected and has size $2s$. Then every closed walk of length $2k-2\ell$ in $T$ rooted at $z$ may be extended to a walk of length $2k$ in $T$ rooted at $x$ by attaching $\pi$ and its reverse. Performing the walk of $\pi$ twice occurs with probability at least $\Delta^{-2\ell}$. Since all of the walks produced this way are distinct, we have
$$e_x^T \tilde A_T^{2k}e_x\ge \Delta^{-2\ell} e_z^T \tilde A_T^{2k-2\ell}e_z.$$
By the same argument $e_z^T\tilde A_T^{2k-2\ell}e_z\ge \Delta^{-4s+2\ell} e_z^T\tilde A_T^{2k-4s}e_z$,  and inequality \eqref{eqn:xzwalks} follows.

Choose $z\in T$ to be the maximizer of $e_z^T \tilde A_{T}^{2k-4s}e_z$, for which we have:
$$ e_z^T\tilde A_T^{2k-4s} e_z \ge \frac{1}{2s}\tr(P_T^{2k-4s}) \ge \frac{\lambda_1(\tilde A_T)^{2k-4s}}{2s}.$$
Combining this with \eqref{eqn:xzwalks} and substituting in \eqref{eqn:w2ks}, we obtain
\begin{align*} 
\P_x(W^{2k,s}) &\le \Delta^{6s}\cdot 2s \shrinkage^{-2k}\lambda_1(\tilde A_T)^{4s} e_x^T\tilde A_T^{2k} e_x\\ &\le \Delta^{6s}\cdot 2s \shrinkage^{-2k}\lambda_1(\tilde A_T)^{4s} \P_x(W^{2k,2s}).
\end{align*}

Applying the inequality $e^{x/2}\leq 1+x$ for $0<x<1$ and the bound $\lambda_1(\tilde A_T)<1$, we obtain
\begin{equation}\label{eqn:wbound}
\P_x(W^{2k,s})\leq \exp\left(6s\log \Delta+\log (2s)-\frac{5k}{128\Delta^{7}s^4}\right)\P_x(W^{2k,2s}),
\end{equation}
which implies
$$
\P_x(W^{2k,s})\leq \exp\left(-\frac{k}{65\Delta^{7}s^4}\right)\P_x(W^{2k,2s})
$$
whenever $$s\leq \frac 14 \left(\frac{k}{\Delta^{7}\log (\Delta)}\right)^{1/5},$$ establishing \eqref{eqn:lowcycles}.

\end{proof}

\section{Bound on Eigenvalue Multiplicity}\label{sec:mult}
In this section we prove Theorem \ref{thm:multbound1}, restated below in slightly more detail.
\begin{theorem}[Detailed Theorem \ref{thm:multbound1}]
Let $G$ be a maximum degree $\Delta$ connected graph on $n$ vertices. If\footnote{If $\Delta\ge \log^{1/7}n/\log\log n$ then \eqref{eqn:mainbound1} is vacuously true.} $\Delta\le \log^{1/7}n/\log\log n$ then the spectrum of the normalized adjacency matrix $\tilde A$ satisfies
\begin{equation}
  m_G\left([(1-\frac{\log\log_\Delta n}{\log_\Delta n})\lambda_2,\lambda_2]\right)= O\left(n\cdot\frac{\Delta^{7/5}(\log^{2/5}\Delta)\log\log n}{\log^{1/5}n}\right).
\end{equation}

\end{theorem}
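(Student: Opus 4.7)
The plan is to adapt the random-deletion strategy of \cite{equiangular}, substituting their logarithmic support bound with the polynomial bound from Theorem \ref{thm:closed}. Set $\epsilon := \log\log_\Delta n/\log_\Delta n$, $\lambda := (1-\epsilon)\lambda_2$, and let $m := m_G([\lambda, \lambda_2])$ be the quantity to bound. I would choose a walk length $k = \Theta(1/\epsilon) = \Theta(\log_\Delta n/\log\log_\Delta n)$, calibrated so that $(1-\epsilon)^{2k}$ remains bounded below by a positive constant, together with a support threshold $s_0 := \tfrac{1}{4}(k/(\Delta^{7}\log\Delta))^{1/5}$ at which Theorem \ref{thm:closed} is applicable.

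Next, I would sample $R \subset V$ uniformly at random with $|R|=r$ (to be optimized) and form the principal submatrix $\tilde A_{G'} := (\tilde A_G)_{V\setminus R}$. By Cauchy interlacing for Hermitian principal submatrices, $\tilde A_{G'}$ has at least $m-r$ eigenvalues in $[\lambda,1]$, hence $\tr(\tilde A_{G'}^{2k}) \ge (m-r)\lambda^{2k}$. For an upper bound on the same trace, the similarity $\tilde A_S \sim P_S$ lets me identify $(\tilde A_{G'}^{2k})_{xx}$ with $\P_x(X_{2k}=x,\, X_i\in V\setminus R\ \forall i)$. Summing over $x\in V\setminus R$ and averaging over $R$, each closed walk $\gamma$ of length $2k$ contributes its SRW weight times $\binom{n-|\support(\gamma)|}{r}/\binom{n}{r}\le (1-r/n)^{|\support(\gamma)|}$. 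Splitting walks by whether $|\support(\gamma)|\le s_0$ or $>s_0$, and bounding the small-support class via Theorem \ref{thm:closed}, gives
\[
\E_R[\tr(\tilde A_{G'}^{2k})] \le \Bigl[e^{-k/(65\Delta^7 s_0^4)} + (1-r/n)^{s_0}\Bigr]\tr(\tilde A^{2k}),
\]
whence Markov's inequality produces a specific $R$ of size $r$ achieving at most twice this value.

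Combining the two bounds and using the trivial $\tr(\tilde A^{2k}) \le n$ yields
\[
m \le r + \frac{2n}{\lambda^{2k}}\Bigl[e^{-k/(65\Delta^7 s_0^4)} + (1-r/n)^{s_0}\Bigr].
\]
With the chosen $k$ and $s_0$, the exponential term has magnitude $\exp(-\Omega((\log n)^{1/5-o(1)}/\Delta^{7/5}))$ and is dominated. Balancing the remaining $(1-r/n)^{s_0}$ contribution against the $r$ term by setting $r \asymp (n/s_0)\log(n/\lambda^{2k})$ then produces an $r$ of the form $\Ot(n\Delta^{7/5}/\log^{1/5}n)$, matching the claim up to polyloglog factors; tracing through the constants more carefully recovers the stated exponent $\log^{2/5}\Delta$ and the $\log\log n$ factor.

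The main obstacle I anticipate is controlling the $1/\lambda^{2k}$ blow-up in the denominator: while $k\epsilon=O(1)$ makes $(1-\epsilon)^{2k}$ a constant, the factor $\lambda_2^{2k}$ may itself be small when $\lambda_2$ is bounded away from $1$, so one must verify that in that regime the $r$ term of the bound dominates on its own (alternatively, tighten the upper bound on $\tr(\tilde A^{2k})$ beyond the trivial $n$ by exploiting that few eigenvalues are comparable to $\lambda_2$). The remaining work is routine bookkeeping of $\mathrm{poly}\log\log n$ factors and exponents of $\Delta$ through the optimization of $(k,s_0,r)$ in the regime $\Delta \le \log^{1/7}n/\log\log n$.
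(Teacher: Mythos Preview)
Your outline is the paper's strategy almost verbatim: random vertex deletion, Cauchy interlacing, and a trace comparison in which Theorem~\ref{thm:closed} controls the small-support walks. The only genuine gap is precisely the obstacle you flag at the end, and your two proposed fixes are not quite what is needed.

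The resolution is not to case-split on whether $\lambda_2$ is close to $1$, nor to argue that ``few eigenvalues are comparable to $\lambda_2$''. It is simply to replace the trivial bound $\tr(\tilde A_G^{2k})\le n$ by
\[
\tr(\tilde A_G^{2k}) \;\le\; 1 + (n-1)\lambda_2^{2k},
\]
which holds because $\lambda_1=1$ is a \emph{simple} eigenvalue of the normalized adjacency matrix of a connected graph, and all remaining eigenvalues satisfy $|\lambda_i|\le\lambda_2$ (if $|\lambda_n|>\lambda_2$ one first passes to the lazy walk $(I+\tilde A)/2$, which does not change multiplicities and only doubles the effective degree). With this bound the factors of $\lambda_2^{2k}$ on the two sides cancel, and the residual ``$+1$'' is harmless once you know $n\lambda_2^{2k}\ge 1$. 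The paper secures that last inequality by exhibiting a test vector supported on two edges at distance $\ge 2$ showing $\lambda_2\ge 1/\Delta$, and then choosing $k=\tfrac13\log_\Delta n$ (rather than your $k=\Theta(\log_\Delta n/\log\log_\Delta n)$). This larger $k$ also gives a slightly larger $s_0$ and is what produces the clean $\log\log n$ factor instead of an extra $(\log\log n)^{1/5}$.

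Everything else in your sketch---the averaging over $R$, the split at the threshold $s_0$, and balancing $r\asymp (cn)/s_0$---matches the paper's argument.
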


\begin{proof}

 For now, assume that $\abs{\lambda_n(P)}\leq \abs{\lambda_2(P)}$.  Let $\P(\cdot)$ denote the law of an SRW $\gamma$ of length $2k$ on $G$, started at a vertex chosen uniformly at random (i.e., {\em not} from the stationary measure of the SRW). Let $W^{2k}:=W^{2k,n}$ denote the event that $\gamma$ returns to its starting vertex after $2k$ steps. In an abuse of notation, let $W^{2k,\geq s+1}:=W^{2k}\backslash W^{2k,s}$ be the event that a walk of length $2k$ is closed and has support at least $s+1$.

Set $k:=\frac13\log_\Delta n$ and $c:=2\log k$ and let $s$ be a parameter satisfying 
\begin{equation}\label{eqn:sets} \P(W^{2k,s})\le e^{-c}\P(W^{2k})\end{equation}
to be chosen later. Delete $cn/s$  vertices from $G$ uniformly at random and call the resulting graph $H$. 

If $\gamma$ has support at least $s+1$, then the probability that none of the vertices of $\gamma$ are deleted is at most
\[\left(1-\frac sn\right)^{\frac{cn}{s}}\leq e^{-c}.\]
Thus, 
$$\E_H \P(\gamma\subset H|\gamma\in W^{2k,\ge s+1})\le e^{-c},$$
where $\E_H$ is the expectation over $H$.
 It then follows by the probabilistic method that there exists a deletion such that the resulting subgraph $H$ of $G$ satisfies $$\P(W^{2k,\ge s+1}\cap \{\gamma\subset H\})\le e^{-c}\P(W^{2k, \geq s+1}).$$

 Write $\lambda_2:=\lambda_2(\tilde A_G)$ and let $m'$ be the number of eigenvalues of $H$ in the interval $[(1-\epsilon)\lambda_2,\lambda_2]$ for $\epsilon:=c/2\log_\Delta(n)$. Since $2k$ is even,
\begin{align*}
m'(1-\epsilon)^{2k}\lambda_2^{2k} &\leq \tr(\tilde A_H^{2k})\\
&=n\P(W^{2k}\cap\{\gamma\subset H\})\\
&=n(\P(W^{2k,s}\cap\{\gamma\subset H\})+\P(W^{2k,\ge s+1}\cap \{\gamma\subset H\}))\\
&\le n(\P(W^{2k,s})+e^{-c}\P(W^{2k,\geq s+1}))\quad\textrm{by our choice of }H\\
&\le n(e^{-c}\P(W^{2k})+e^{-c}\P(W^{2k,\geq s+1}))\quad\textrm{by \eqref{eqn:sets}}\\
&\leq 2e^{-c}\tr(\tilde A_G^{2k})\\
&\leq  2e^{- c}(n\lambda_2^{2k}+1).
\end{align*}
We may assume that the diameter of $G$ is at least $4$ as otherwise $\Delta\geq n^{1/4}$, making the theorem statement vacuous. Because of the diameter, we can take two edges $(u,v)$ and $(a,b)$, such that the distance between the edges is at least $2$. Then consider the vector $\phi$ such that
\[
\phi(x)=\left\{
\begin{array}{cc}
    1 &  x\in \{u,v\}\\
     -1& x\in\{a,b\}\\
     0&\textnormal{ otherwise}
\end{array}
\right.
\]

As the top eigenvector of the normalized adjacency matrix is the all ones vector, $\phi$ is orthogonal to it. Therefore by Courant Fisher
\[
\lambda_2\geq \frac{\phi^TD^{-1/2}AD^{-1/2}\phi}{\phi^T\phi}\geq \frac{4\Delta^{-1}}{4}= \frac{1}{\Delta}.
\]
By our choice of $k$, this means $n\lambda_2^{2k}\geq 1$. Moreover,

\begin{align*}
    \epsilon \le \frac{2\log\log n}{2\log_\Delta n}\le \frac{\log \Delta\log\log n}{\log n}<1/2,
\end{align*}
based on our assumptions on $\Delta$. Thus, $1-\epsilon\geq e^{-1.5\epsilon }$. Combining these facts,
\[
m'\lambda_2^{2k}\leq 4e^{3k\epsilon- c}n\lambda_2^{2k},
\]
yielding
\[
m'\leq 4ne^{3k\epsilon-c}\le 4ne^{-c/2}=O\left(\frac{n}{\log_\Delta n}\right).
\]
As we created $H$ by deleting $cn/s$ vertices, it follows by Cauchy interlacing that the number of eigenvalues of $\tilde A$ in $[(1-\epsilon)\lambda_2,\lambda_2]$ is at most
\[
\frac {cn}{s}+O\left(\frac{n}{\log_\Delta n}\right).
\]

We now show that taking
$$ s:=\frac{1}{4}\left(\frac{k}{\Delta^{7}\log \Delta}\right)^{1/5}$$
satisfies \eqref{eqn:sets}.
Applying Theorem \ref{thm:cyclesup} equation \eqref{eqn:lowcycles} to each $x\in G$ and summing, we have
\begin{align*}
    \frac{\P(W^{2k,s})}{\P(W^{2k})} &\le \exp\left(-\frac{k}{65\Delta^{7}s^4}\right)\\
    &\le \exp \left(-\Omega\left(\frac{\log n\log^{2/5}\Delta}{\Delta^{7/5}}\right)\right)\\
    &\ll \exp(-c)=\exp(-\Theta(\log\log_\Delta n)),
\end{align*}
		satisfying \eqref{eqn:sets} for sufficiently large $n$,
and we conclude that
$$m_G\left([(1-\frac{\log\log_\Delta n}{\log_\Delta n})\lambda_2,\lambda_2]\right)=O\left(n\cdot\frac{\Delta^{7/5}\log^{2/5}\Delta\log\log n}{\log^{1/5}n}\right),$$
as desired.

If $\abs{\lambda_n}> \abs{\lambda_2}$, we can do a lazy walk with probability of moving $p=\frac12$, therefore making all eigenvalues nonnegative. This is equivalent to doubling the degree of every vertex by adding loops. This is the equivalent of taking the simple random walk on a graph with maximum degree $2\Delta$, requiring $s\leq \frac 1{11} \left(\frac{k }{\Delta^{7}\log \Delta}\right)^{1/5}$, yielding the same asymptotics. 
\end{proof}
\begin{comment}
\begin{remark}  The proof above can accommodate setting $c:=k^{\beta}$ for
small $\beta>0$, yielding bounds on $m_G([(1-\log_\Delta^{-1+\beta}
n)\lambda_2,\lambda_2]).$ which are worse by a small polylogarithmic
factor.\end{remark}

\end{comment}

\section{Examples}\label{sec:examples}
In this section, we consider examples demonstrating some of the points raised in the introduction regarding the tightness of our results. As most of our results in this section are combinatorial rather than probabilistic, we will consider multiplicity in the non-normalized adjacency matrix $A$. For regular graphs, this is equivalent.
\subsection{Bipartite Ramanujan Graphs}\label{sec:examples1}
%To show that such bounds are possible for $\lambda_2$ but not any eigenvalue, consider the following simple example. Take $G=(V,E)$ as any graph with $|V|=n$. Create a new graph $G'$, with vertex set $V_1\cup V_2$, with $V_1,V_2\simeq V$. If $(u,v)\in E$, then we create the edges $(u_1,v_1),(u_1,v_2),(u_2,v_1),(u_2,v_2)$. Therefore each vertex of $G'$ has double the adjacency it had in $G$. Then for $v\in V$, the vectors $e_{v_1}-e_{v_2}$ form an $n$-dimensional eigenspace with eigenvalue 0.
 We show that bipartite Ramanujan graphs (see \cite{lubotzky1988ramanujan}; known to exist for every $d\ge 3$ by \cite{marcus2015interlacing}) have high multiplicity near $\lambda_2$. This means that the bound of $n/\log^{\Theta(1)}n$ of Theorem \ref{thm:multbound1} is tight. 
% \begin{thm}[Friedman \cite{Fried} Corollary 3.6]\label{lem:fried}
% 	Let $G$ be a $d$-regular graph on $n$ vertices. Then
% 	$$\lambda_2(A_G)\geq {2\sqrt{d-1}}(1-O(1/\log^2n)).$$
% \end{thm}

\begin{lemma}[McKay \cite{McKay81} Lemma 3]\label{lem:mckay}
	The number of closed walks on the infinite $d$-regular tree of length $2k$ starting at a fixed vertex is $\Theta\left( \frac{4^k(d-1)^k}{k^{3/2}} \right)$.
\end{lemma}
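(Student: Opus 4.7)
The plan is to identify $N_d(2k)$, the number of closed walks of length $2k$ on the infinite $d$-regular tree $T_d$ rooted at $v_0$, with the $2k$-th moment of the spectral measure $\mu_d$ of the adjacency operator $A_{T_d}$ at $v_0$, and then extract the asymptotic via Laplace's method. The relevant measure is the classical Kesten--McKay measure, supported on $[-2\sqrt{d-1},\, 2\sqrt{d-1}]$ with density
\[
f_d(x) \;=\; \frac{d}{2\pi} \cdot \frac{\sqrt{4(d-1) - x^2}}{d^2 - x^2},
\]
which one can derive by exploiting the recursive self-similarity of $T_d$ to obtain a quadratic equation for the Stieltjes transform $G(z) = \langle e_{v_0}, (zI - A_{T_d})^{-1}\, e_{v_0}\rangle$ and then applying Stieltjes inversion. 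This yields the integral representation $N_d(2k) = \int x^{2k}\, f_d(x)\, dx$.

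The substitution $x = 2\sqrt{d-1}\cos\theta$ turns the moment integral into
\[
N_d(2k) \;=\; \frac{2d(d-1)\,(4(d-1))^k}{\pi} \int_0^\pi \frac{\cos^{2k}\theta\,\sin^2\theta}{d^2 - 4(d-1)\cos^2\theta}\, d\theta.
\]
For large $k$ the factor $\cos^{2k}\theta$ concentrates on windows of width $O(k^{-1/2})$ around $\theta = 0$ and, by symmetry, $\theta = \pi$. Near $\theta = 0$ I would insert $\cos\theta = 1 - \theta^2/2 + O(\theta^4)$, $\sin^2\theta = \theta^2 + O(\theta^4)$, and use that the denominator tends to the nonzero constant $d^2 - 4(d-1) = (d-2)^2$, reducing the local integral to a Gaussian:
\[
\frac{1}{(d-2)^2}\int_0^\infty e^{-k\theta^2}\,\theta^2\, d\theta \;=\; \frac{\sqrt{\pi}}{4(d-2)^2\, k^{3/2}}.
\]
Doubling for the analogous contribution near $\theta = \pi$ and substituting back gives
\[
N_d(2k) \;=\; \frac{d(d-1)}{\sqrt{\pi}\,(d-2)^2}\cdot \frac{4^k(d-1)^k}{k^{3/2}}\,(1+o(1)),
\]
delivering both the upper and lower bounds simultaneously.

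The main technical step is checking that the complement of an $O(k^{-1/2+\eps})$ window around the endpoints contributes only lower-order terms, which is immediate since $|\cos\theta|$ is bounded strictly below $1$ there, so $\cos^{2k}\theta$ decays exponentially in $k$. The implicit hypothesis is $d \geq 3$: when $d = 2$ the denominator $d^2 - 4(d-1)\cos^2\theta$ vanishes at the endpoints and the measure degenerates to the arcsine distribution, giving $\binom{2k}{k}\sim 4^k/\sqrt{\pi k}$ rather than the stated $k^{-3/2}$ rate. A purely combinatorial alternative would instead encode each closed walk as a Dyck path recording its distance from the root, weight each descent step by $d$ at the root and by $d-1$ elsewhere, and apply the cycle lemma to extract $N_d(2k) = \sum_{j=1}^k d^j(d-1)^{k-j}\frac{j}{2k-j}\binom{2k-j}{k}$; Stirling applied to the dominant $j = O(1)$ terms then recovers the same $k^{-3/2}$ asymptotic.
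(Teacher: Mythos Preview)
The paper does not prove this lemma at all --- it is quoted directly from McKay's 1981 paper as a black box, so there is no ``paper's own proof'' to compare against. Your spectral-measure derivation is correct and in fact overshoots the target: you recover the precise leading constant $\tfrac{d(d-1)}{\sqrt{\pi}(d-2)^2}$, whereas the paper only needs the $\Theta(\cdot)$ order of growth. The substitution and Laplace computation check out, and your remark that the argument requires $d\ge 3$ (with the $d=2$ case degenerating to the central binomial asymptotic $\sim 4^k/\sqrt{\pi k}$) is exactly right and consistent with how the lemma is used in Section~\ref{sec:examples1}, where $d\ge 3$ is assumed.

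McKay's original argument is closer in spirit to your ``combinatorial alternative'': he works with the generating function for return probabilities on $T_d$, derived from the tree's self-similarity, and extracts coefficients. Your Kesten--McKay route is the Stieltjes-transform repackaging of the same recursion, so the two are not really independent proofs but different presentations of one computation. One small caution: the explicit summation you write at the end, $\sum_{j=1}^k d^j(d-1)^{k-j}\tfrac{j}{2k-j}\binom{2k-j}{k}$, deserves a careful check --- the count of Dyck paths of semilength $k$ with exactly $j$ returns to zero is $\tfrac{j}{2k-j}\binom{2k-j}{k-j}$ (equivalently $\tfrac{j}{k}\binom{2k-j-1}{k-1}$), not $\tfrac{j}{2k-j}\binom{2k-j}{k}$, so the binomial index may be off. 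This does not affect your main argument, which stands on the Laplace analysis.
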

\begin{prop}
	There exists a constant $\alpha>0$ such that for fixed $d$, every bipartite $d$-regular bipartite Ramanujan graph $G$ on $n$ vertices satisfies

	$$m_G\left([\lambda_2(1-\alpha\frac{\log\log(n)}{\log(n)}), \lambda_2]\right)=\Omega(n/\log^{3/2}(n)).$$
\end{prop}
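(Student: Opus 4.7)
The plan is to sandwich $\mathrm{tr}(A^{2k})$ between a combinatorial lower bound (from McKay together with the universal covering of $G$ by the $d$-regular tree) and a spectral upper bound (from the Ramanujan property together with bipartite symmetry), with $k := c\log n$ for a suitable constant $c = c(d)>0$. Solving for $m := m_G([(1-\epsilon)\lambda_2,\lambda_2])$ then yields the claim.

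For the upper bound, the bipartite structure gives a spectrum of $A$ that is symmetric about $0$, with $\lambda_1=d$ and $\lambda_n=-d$ of multiplicity one. Hence exactly $2m$ eigenvalues satisfy $|\lambda_i|\in [(1-\epsilon)\lambda_2,\lambda_2]$, while the remaining $n-2-2m$ non-Perron eigenvalues satisfy $|\lambda_i|\leq (1-\epsilon)\lambda_2$ (using the Ramanujan bound $|\lambda_i|\le 2\sqrt{d-1}$ for non-trivial eigenvalues and the fact that $\lambda_2\le 2\sqrt{d-1}$). This gives
\[ \mathrm{tr}(A^{2k}) \;\leq\; 2d^{2k} + 2m\lambda_2^{2k} + n\bigl((1-\epsilon)\lambda_2\bigr)^{2k}. \]

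For the lower bound, I will use that for any $d$-regular graph $G$ and vertex $v$, the universal covering map $\pi: T_d \to G$ is locally injective, so distinct closed walks of length $2k$ from the root of $T_d$ project (via $\pi$) to distinct closed walks of length $2k$ from $v$ in $G$. Summing over $v$ and invoking Lemma~\ref{lem:mckay} gives $\mathrm{tr}(A^{2k}) \geq n \cdot \Theta(4^k(d-1)^k/k^{3/2})$. Theorem~\ref{lem:fried} combined with the Ramanujan upper bound $\lambda_2 \le 2\sqrt{d-1}$ yields $\lambda_2 = 2\sqrt{d-1}(1 - O(1/\log^2 n))$, so $\lambda_2^{2k} = 4^k(d-1)^k(1-o(1))$ whenever $k = o(\log^2 n)$; consequently $\mathrm{tr}(A^{2k}) \geq n \cdot \Theta(\lambda_2^{2k}/k^{3/2})$.

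Combining the two bounds and dividing by $\lambda_2^{2k}$, it remains to pick $c,\alpha>0$ so that the ``Perron tail'' $2(d/\lambda_2)^{2k}$ and the ``bulk tail'' $n(1-\epsilon)^{2k}$ are each $o(n/k^{3/2})$. Since $d/\lambda_2 \to d/(2\sqrt{d-1}) > 1$, any $c < 1/(2\log(d/(2\sqrt{d-1})))$ kills the Perron tail; and the bound $(1-\epsilon)^{2k}\le e^{-2k\epsilon}$ then reduces the bulk condition to $2k\epsilon \ge (3/2)\log k + \omega(1)$, which is satisfied by any $\alpha > 3/(4c)$ and fixes the constant $\alpha$ in the statement. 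Rearranging gives $m = \Omega(n/k^{3/2}) = \Omega(n/\log^{3/2}n)$, as desired. The main obstacle is supplying a McKay-type trace lower bound for an \emph{arbitrary} bipartite Ramanujan graph --- without any girth hypothesis --- and this is precisely what the universal covering argument achieves; the rest is a two-parameter optimization in $c$ and $\alpha$.
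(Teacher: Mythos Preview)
Your proposal is correct and follows essentially the same approach as the paper's proof: both sandwich $\mathrm{tr}(A^{2k})$ for $k=\Theta(\log n)$ between McKay's tree-walk lower bound (valid for any $d$-regular graph via the universal cover) and a spectral upper bound exploiting bipartite symmetry and the Ramanujan property, then choose the walk-length constant small enough to suppress the $d^{2k}$ term and $\alpha$ large enough to suppress the $(1-\epsilon)^{2k}$ bulk term. The only cosmetic difference is that the paper replaces $\lambda_2$ by $2\sqrt{d-1}$ in the upper bound (using Friedman upfront), whereas you keep $\lambda_2$ and invoke Friedman to equate $\lambda_2^{2k}$ with $(4(d-1))^k$ up to $1+o(1)$; this leads to the harmless discrepancy between your threshold $\alpha>3/(4c)$ and the paper's $\alpha>3/(2\beta)$.
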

\begin{proof}
	Let $k=\beta\log n$ for some constant $\beta$ to be set later and suppose that there are $m$ eigenvalues of $A_G$ in the interval $[\lambda_2\left(1-\alpha\frac{\log\log(n)}{\log(n)}\right),\lambda_2]$. Recall that the spectrum of a bipartite graph is symmetric around 0. From \cref{lem:mckay} it follows that for some constant $C$,
	\begin{align*}
		Cn\left( \frac{4^k(d-1)^k}{k^{3/2}} \right)
		&\leq \sum_{i=1}^n\lambda_i(A_G)^{2k} 
		\\ &\leq 2d^{2k}+(n-2m)\left(2\sqrt{d-1}\left(1-\alpha\frac{\log\log(n)}{\log(n)}\right)\right)^{2k} + 2m(2\sqrt{d-1})^{2k}. 
	\end{align*}
	If we let $\beta$ be sufficiently small and $\alpha>\frac3{2\beta}$, rearranging yields
	\begin{align*}
		\frac{m}n 
		   &\geq \frac{C\frac{4^k(d-1)^k}{k^{3/2}} -\frac{2d^{2k}}{n}-\left(2\sqrt{d-1}\left(1-\alpha\frac{\log\log(n)}{\log(n)}\right)\right)^{2k} }{2(2\sqrt{d-1})^{2k}\cdot \left(1-\left(1-\alpha\frac{\log\log(n)}{\log(n)}\right)^{2k}\right)}
		\\ &=  \Omega\left( \frac{1-2n^{2\beta-1}}{k^{3/2}} - \frac{\left(1-\alpha\frac{\log\log(n)}{\log(n)}\right)^{2k}}{1-\left(1-\alpha\frac{\log\log(n)}{\log(n)}\right)^{2k}}\right)
				\\ &=  \Omega\left( \frac{1}{k^{3/2}} - \frac{1}{e^{2\alpha\beta\log\log(n)}}\right)
		\\ &=\Omega\left( \frac{1}{k^{3/2}}\right).
	\end{align*}
\end{proof}

\subsection{Mangrove Tree}\label{sec:examples3}
This section shows that the dependence on $|V|$ in Corollary \ref{cor:irregperron1} is tight up to polylogarithmic factors. Our example begins with a path of multiedges containing $n$ vertices, where each multiedge of the path is composed of $d/2$ edges for some even $d$. At both ends of the path, we attach a tree of depth $\log_{d-1}n$. The roots have degree $d/2$ and all other vertices (besides the leaves) have degree $d$. Therefore the only vertices in the graph that are not degree $d$ are the leaves of the two trees. Call this graph $Q$. An example of this graph is shown in Figure \ref{fig:palm}.
\begin{prop}
For every vertex $u$ of degree less than $d$, $\psi_Q(u)=\tilde O(n^{-5/2})$, where $\tilde O$ suppresses dependence on logarithmic factors and $d$. 
\end{prop}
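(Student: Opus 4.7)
The plan is to compute $\psi_Q$ essentially exactly by exploiting the symmetry of $Q$. Reflection through the midpoint of the path and the automorphisms of each tree force $\psi_Q$ to depend only on path position (with $\psi(v_k) = \psi(v_{n+1-k})$) or, within a tree, only on depth $h \in \{0,\dots,L\}$ where $L = \log_{d-1} n$ (depth $0$ being the path endpoint $=$ tree root). Viewing $Q$ as an induced subgraph of a $d$-regular supergraph, so that $\tilde A_Q = A_Q/d$ and $\psi_Q$ coincides with the Perron eigenvector of $A_Q$, the equation $A_Q\psi_Q = \lambda\psi_Q$ splits into a path recurrence $\lambda\psi(v_k) = \tfrac{d}{2}(\psi(v_{k-1})+\psi(v_{k+1}))$, a tree recurrence $\lambda\psi_h = \psi_{h-1}+(d-1)\psi_{h+1}$ with leaf condition $\lambda\psi_L = \psi_{L-1}$, and a coupling equation $\lambda\psi(v_1) = \tfrac{d}{2}\psi(v_2) + \tfrac{d}{2}\psi_1$ at the root.

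For the tree I would use the ansatz $\psi_h = A\alpha^h + B\beta^h$, where $\alpha<\beta$ are the roots of $(d-1)x^2 - \lambda x + 1 = 0$. A Rayleigh--Ritz bound using the sine eigenvector of the pure multi-edge path gives $\lambda \ge d\cos(\pi/(n+1))$, while the max-degree bound gives $\lambda \le d$; together these pin down $\alpha = (1+O(n^{-2}))/(d-1)$ and $\beta = 1 + O(n^{-2})$. Applying the characteristic identity $\lambda x - 1 = (d-1)x^2$ to the leaf boundary forces $B = -A(\alpha/\beta)^{L+1} = O(A/n)$, so away from the leaf $\psi_h = (1+o(1))A\alpha^h$, and in particular $\psi_L = \Theta(A\alpha^L) = \Theta(A/n)$.

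For the path I would use the symmetric ansatz $\psi(v_k) = C\cos(\theta(k-(n+1)/2))$ with $\lambda = d\cos\theta$, and rewrite the coupling at $v_1$ as $\psi_1 = \psi(v_0)$, where $\psi(v_0) := (2\lambda/d)\psi(v_1) - \psi(v_2) = C\cos(\theta(n+1)/2)$ is the value obtained by formally extending the path recurrence by one step. Combined with $\psi_1 \approx A\alpha$ and $A = \psi(v_1) = C\cos(\theta(n-1)/2)$, this forces $\cos(\theta(n+1)/2)$ to equal $\alpha\cdot\cos(\theta(n-1)/2)$, which is nearly zero, so $\theta = (1+O(\alpha))\pi/(n+1)$ and $A = \Theta(C\theta) = \Theta(C/n)$. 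Normalization then finishes the job: the path carries $\ell_2$-mass $(1+o(1))Cn/2$ while each tree contributes only $\sum_{h=1}^{L}\tfrac{d}{2}(d-1)^{h-1}(A\alpha^h)^2 = O(A^2/d) = O(C^2/(dn^2))$ by a geometric-series calculation, so $C = \Theta(n^{-1/2})$ and hence
\[
\psi_Q(\mathrm{leaf}) = \psi_L = \Theta(A/n) = \Theta(C/n^2) = \Theta(n^{-5/2}),
\]
with the $d$-dependence (entering through $L = \log_{d-1}n$ and the implicit constants) absorbed into the $\tilde O(\cdot)$. The sets $X_0, X_1, X_2$ of Figure \ref{fig:palm} presumably correspond respectively to the central bulk of the path, the tree-root transition region, and the leaves, tracking the three regimes $\psi \sim n^{-1/2}$, $n^{-3/2}$, $n^{-5/2}$ that emerge from this calculation.

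The main obstacle, and really the only subtle step, is propagating the $O(n^{-2})$-scale perturbations of $\lambda$ through the matching condition to justify that the asymptotics $\alpha\approx 1/(d-1)$, $\beta\approx 1$, the suppression of the $B$-term in the tree, and the leading-order expression $A = \Theta(C/n)$ all survive to the leaf estimate. I expect this bookkeeping to be routine once the two-term tree ansatz and the formal-extension device for the path boundary are in place, but it must be done consistently in order to nail down the exact $n^{-5/2}$ rate and in particular to rule out either an extra power of $n$ or an unexpected cancellation.
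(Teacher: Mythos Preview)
Your approach is correct and would yield the claimed bound (indeed, the stronger two-sided estimate $\psi_Q(\text{leaf}) = \Theta_d(n^{-5/2})$). The tree ansatz $\psi_h = A\alpha^h + B\beta^h$ with the leaf condition forcing $B = -A(\alpha/\beta)^{L+1}$ is exactly Kahale's $\sinh$ formula in disguise, and the paper uses that same formula for the ratio $r_\ell/r_0$. Where you diverge is in bounding the root value $r_0 = A$: you solve the coupling condition $\psi_1 = \psi(v_0)$ to pin down $\theta$ to first order and read off $A = \Theta(C/n)$, then normalise directly. The paper instead passes to the quotient matrix of the equitable partition (the sets $X_0,X_1,X_2,\dots$ are the level sets of the tree by depth, not the three regimes you guessed), symmetrises it, and sandwiches the result between the spectral radii of multi-edge paths of lengths $n$ and $n+2\log_{d-1}n$; this bounds the total $\ell_2$-mass on the tree without ever solving for $\theta$. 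Your route is more elementary and gives a matching lower bound for free; the paper's route is more robust in that it never needs to track the perturbation of $\theta$ through the matching equation, which is exactly the bookkeeping you flagged as the main obstacle. One small slip: the path carries squared $\ell_2$-mass $(1+o(1))C^2 n/2$, not $Cn/2$.
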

Therefore, we cannot hope to do significantly better than our analysis in Lemma \ref{lem:increase}, in which we find a vertex $u$ of non-maximal degree with $\psi(u)\geq1/(d\lambda_1n^{5/2})$.

\begin{proof}
For simplicity, call $\lambda_1(A_Q)=\lambda_1$ and $\psi_Q=\psi$. By the symmetry of the graph, the value of $\psi$ at vertices in the tree is determined by the distance from the root. Call the entries of $\psi$ corresponding to the tree $r_0,r_1,\ldots, r_\ell$, where the index indicates the distance from the root. 

By the discussion in the proof of Kahale \cite{Kah} Lemma 3.3, if we define $$\theta:=\log\left(\frac{\lambda_1}{2\sqrt{d-1}}+\sqrt{\frac{\lambda_1^2}{4(d-1)}-1}\right),$$ then for $0\leq i\leq \ell$, entries of the eigenvector must satisfy 
\[
\frac{r_i}{r_0}=\frac{\sinh((\ell+1-i)\theta)(d-1)^{-i/2}}{\sinh((\ell+1)\theta)}
\]
where $\ell$ is the depth of the tree.

Therefore, $r_\ell/r_0=\frac{\sinh(\theta)(d-1)^{-\ell/2}}{\sinh((\ell+1)\theta)}$.
Examining the various terms, $\sinh(\theta)\leq d$ and $(d-1)^{-\ell/2}=\frac{1}{\sqrt{n}}$. To bound the third term, we use the definition $\sinh(x)=(e^x-e^{-x})/2$, which yields
\[
\sinh((\ell+1)\theta)\geq \frac{1-o_n(1)}2\left(\frac{\lambda_1}{2\sqrt{d-1}}+\sqrt{\frac{\lambda_1^2}{4(d-1)}-1}\right)^{\log_{d-1}n+1}.
\]
$\lambda_1$ is at least the spectral radius of the path of length $n$ with $d/2$ multiedges between vertices. This spectral radius is $d\cos(\pi/(n+1))$.  This gives
\begin{eqnarray*}
\sinh((\ell+1)\theta)&\geq&  \frac{1-o_n(1)}{2(2\sqrt{d-1})^{\log_{d-1}n+1}}\left(d(1-\frac{\pi^2}{2n^2})+\sqrt{d^2(1-\frac{\pi^2}{2n^2})^2-4d+4}\right)^{\log_{d-1}n+1}\\
&\geq&\frac{1-o_n(1)}{2(2\sqrt{d-1})^{\log_{d-1}n+1}}\left(d+d-2\right)^{\log_{d-1}n+1}\left(1-O\left(\frac{d}{n^2}\right)\right)^{\log_{d-1}n+1}\\
&\geq&\frac{1-o_n(1)}{2}e^{-O({d\log_{d-1} n}/{n^2})}\sqrt{n} \geq\frac{\sqrt n}3
\end{eqnarray*}
for large enough $n$. Therefore
\begin{equation}\label{eq:endbound}
\frac{r_\ell}{r_0}=\frac{\sinh(\theta)(d-1)^{-\ell/2}}{\sinh((\ell+1)\theta)}\leq \frac{3d}{ n}.
\end{equation}

At this point, we know the ratio between $r_\ell$ and $r_0$, but still need to bound the overall mass of the eigenvector on the tree. A ``regular partition'' is a partition of vertices $V=\bigsqcup_{i=0}^k X_j$ where the number of neighbors a vertex $u\in X_i$ has in $X_j$ does not depend on $u$. We can create a \textit{quotient matrix}, where entry $i,j$ corresponds to the number of neighbors a vertex $u\in X_i$ has in $X_j$. For an overview of quotient matrices and their utility, see Godsil, \cite[Chapter 5]{Godsil}. In our partition, every vertex in the path is placed in a set by itself. The vertices of each of the two trees are partitioned into sets according to their distance from the two roots.  Call the matrix according to this partition $B_Q$. We denote by $B_Q(X_i,X_j)$ the entry in $B_Q$ corresponding to edges from a vertex in $X_i$ to $X_j$.

Define $X_0,\ldots X_\ell$ as the sets corresponding to vertices in the first tree of distance $0,\ldots, \ell$ from the root. For $1\leq j\leq \ell-1$, $B_Q(X_{0},X_{1})=d/2$. $B_Q(X_j,X_{j+1})=d-1$.  Moreover, for $0\leq j\leq \ell-1$, $B_Q(X_{j+1},X_{j})=1$. All values between vertices in the path are unchanged at $d/2$.

Consider the diagonal matrix $D$ with $D_{i,i}:=|X_i|^{-1/2}$. $D^{-1}B_Q D$ is a symmetric matrix. Define $C:=D^{-1}B_Q D$ We now have $C(X_{j+1},X_{j})=C(X_{j},X_{j+1})=\sqrt{d-1}$ for $1\leq j\leq \ell-1$, and $C(X_{0},X_{1})=C(X_{1},X_{0})=\sqrt{d/2}$.

If a vector $\phi$ is an eigenvector of $C$, then $D\phi$ is an eigenvector of $B_Q$ with the same eigenvalue. By the definition of $D$ this means
\begin{equation}\label{eq:mass}
\psi_C(X_i)^2=\sum_{u\in X_i}\psi_{A_Q}(u)^2.
\end{equation}

 Define $C_{X_{0:\ell}}$ as the submatrix of $C$ corresponding the the sets $\{X_0,\ldots,X_\ell\}$, then extended with zeros to have the same size as $C$. Every entry of $C+\frac{d/2-\sqrt{d-1}}{\sqrt{d-1}}C_{X_{0:\ell}}$ is less than or equal to the corresponding entry of the adjacency matrix of a path of length $n+2\log_{d-1}n$ with $d/2$ edges between pairs of vertices. Also, $\psi_C$ is a nonnegative vector. Therefore the quadratic form $\psi_C^T(C+\frac{d/2-\sqrt{d-1}}{\sqrt{d-1}}C_{X_{0:\ell}})\psi_C$ is at most the spectral radius of this path. Namely
\[
\psi_C^TC\psi_C+\frac{d/2-\sqrt{d-1}}{\sqrt{d-1}}\psi_C^TC_{X_{0:\ell}}\psi_C\leq d\cos(\pi/(n+2\log_{d-1}n+1)).
\]
Because $C$ contains the path of length $n$, $\psi_C^TC\psi_ C\geq d\cos(\pi/(n+1))$. Putting these together yields
\begin{equation}\label{eq:palmeq}
\psi_C^TC_{X_{0:\ell}}\psi_C\leq \frac{\sqrt{d-1}}{d/2-\sqrt{d-1}}\cdot d (\cos(\pi/(n+2\log_{d-1}n+1))-\cos(\pi/(n+1)))\leq \frac{d\sqrt{d-1}}{d/2-\sqrt{d-1}}\frac{3\pi^2\log_d n}{n^3}.
\end{equation}
Define $\psi_C(X_{1:\ell})$ as the projection of $\psi_C$ on $\{X_1,\ldots X_\ell\}$. $C\psi_C(X_{1:\ell})=C_{X_{0:\ell}}\psi_C(X_{1:\ell})$, so
\begin{equation}\label{eq:palmeq2}
\psi_C^TC_{X_{0:\ell}}\psi_C\geq \lambda_1\|\psi_C(X_{1:\ell})\|^2\geq d(\cos(\pi/(n+1)))\|\psi_C(X_{1:\ell})\|^2
\end{equation}
Combining (\ref{eq:palmeq}) and (\ref{eq:palmeq2}) yields 
\[
\|\psi_C(X_{1:\ell})\|^2\leq \frac{\sqrt{d-1}}{d/2-\sqrt{d-1}}\left(\frac{3\pi^2\log_d n}{n^3}\right)/\cos(\pi/n+1)\leq \left(\frac{21\pi^2\log_d n}{n^3}\right)
\]
assuming $d\geq 4$ and $n$ is sufficiently large.

Using $(\ref{eq:mass})$ and the eigenvalue equation, we obtain \[\psi_Q(r_0)=\psi_C(X_0)\leq\lambda_1(A_C)\|\psi_C(X_{1:\ell})\|\leq d\cdot \frac{{5}\pi\log^{1/2}_d n}{n^{3/2}}.\] Therefore, according to (\ref{eq:endbound})
\[
r_\ell\leq \frac{15d^2\pi(\log^{1/2}_d n)}{n^{5/2}}.
\]

\end{proof}

\section{Open Problems}\label{sec:openprobs}
We conclude with some promising directions for further research.

\subsection*{Beyond the Trace Method: Polynomial Multiplicity Bounds} There is a large gap between our upper bound of $O(n/\log^{1/5}n)$ on the multiplicity of the second eigenvalue and the lower bound of $n^{1/3}$ mentioned after \cref{thm:zhao}. It is very natural to ask, whether the bound of this paper may be improved. To improve the bound beyond $O(n/\polylog(n))$, however, it appears that a very different approach is needed.
\begin{openprob}[Similar to Question 6.3 of \cite{equiangular}]
Let $d>1$ be fixed integer. Does there exist an $\eps>0$ such that for every connected $d$-regular graph $G$ on $n$ vertices, the multiplicity of the second largest eigenvalue of $A_G$ is $O(n^{1-\eps})$?
\end{openprob}
In the present paper, we rely on the trace method to bound eigenvalue multiplicity through closed walks. There are three drawbacks to this approach that stops a bound on the second eigenvalue multiplicity below $n/\polylog(n)$. First, considering walks of length $\omega(\log(n))$ makes the top eigenvalue dominate the trace, leaving no information behind. Second, considering the trace $\tr A_G^k$ for $k=O(\log(n))$ it is impossible to distinguish eigenvalues that differ by $O(1/\log(n))$. Third, as covered in \cref{sec:examples1}, there exist graphs such that there are $\Omega(n/\polylog(n))$ eigenvalues in a range of that size around the second eigenvalue. Thus, the trace method reaches a natural barrier at $n/\polylog(n))$.

\subsection*{Eigenvalue Multiplicity for Unnormalized Non-Regular Graphs} Another natural question is whether \cref{thm:multbound1} may be extended to hold for the (non-normalized) adjacency matrix of non-regular graphs. 
\begin{openprob}
    Let $\Delta>1$ be a fixed integer. Does it hold for every connected graph $G$ on $n$ vertices of maximum degree $\Delta$ that the multiplicity of the second largest eigenvalue of $A_G$ is $o(n/\log\log(n))$?
\end{openprob}

 In order to handle unnormalized irregular graphs via the approach in this paper, the key ingredient needed would be an ``unnormalized'' analogue of Theorem \ref{thm:closed}, showing that a uniformly random closed walk (from the set of all closed walks) in an irregular graph must have large support. We exhibit in Appendix \ref{sec:examples2} an irregular ``lollipop'' graph for which the typical support of a closed walk from a specific vertex is only $O(\polylog(k))$. It remains plausible that when starting from a random vertex, a randomly selected closed walk has poly$(k)$ support in irregular graphs.

\subsection*{Sharper Bounds for Closed Random Walks} We have no reason to believe that the exponent of $1/5$ appearing in  \cref{thm:closed} is sharp. In fact, we know of no example where where the answer is $o(k^{1/2})$. An improvement over \cref{thm:closed} would immediately yield an improvement of \cref{thm:multbound1}. 
\begin{openprob}
    Let $d>1$ be a fixed integer. Does there exist an $\alpha> 1/5$ such that for every connected $d$-regular graph $G$ on $n$ vertices and every vertex $x$ of $G$, a random closed walk of length $2k<n$ rooted at $x$ has support $\Omega(k^\alpha)$ in expectation? Is $\alpha=1/2$ true? Does such a bound hold for SRW in general?
\end{openprob}

\subsection*{Acknowledgments}
We thank Yufei Zhao for telling us about \cite{equiangular} at the Simons Foundation conference on High Dimensional Expanders in October, 2019. We thank Shirshendu Ganguly for helpful discussions. We thank Cyril Letrouit for pointing out an error in the previous proof of Proposition 5.2.

\bibliographystyle{alpha}
\bibliography{ref}
\appendix
\section{Proofs for high degree regular graphs}\label{sec:highdegproofs}

\begin{theorem}[Detailed Theorem \ref{thm:highdegwalk}]\label{thm:highdegclosed}
If $G$ is $d$-regular, has exactly $h$ self-loops at every vertex, and no multi-edges\footnote{This technical assumption is used to handle the case when $|\lambda_n(A_G)|>\lambda_2(A_G)$ in Theorem \ref{thm:highdeg2}. Here we take $h=0$.}, then
\begin{equation}\label{eqn:hicycles}
\P_x(W^{2k,s})\leq \exptwo\P_x(W^{2k,2s}) \qquad\textrm{for}\quad s\le \stwo.
\end{equation}
\end{theorem}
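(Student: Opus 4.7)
The approach is to adapt the proof of Theorem~\ref{thm:cyclesup}, replacing the Perron eigenvector lower bound from Theorem~\ref{thm:irregperron} with the sharper estimate available when every vertex of $S$ lies on the boundary of $S$ in $G$. The key structural observation, noted in Section~\ref{sec:highdeg}, is that for a simple graph with $h$ self-loops at each vertex and $|S|\le d-h$, every $v\in S$ has $d-h$ distinct non-loop neighbors and $|S\setminus\{v\}|<d-h$, so at least one of these neighbors lies outside $S$. Consequently the $\psi_S$-maximizer $u\in S$ sits on the boundary, and we may take $\psi_S(u)\ge 1/\sqrt{|S|}$ directly from the unit-norm constraint, bypassing Theorem~\ref{lem:electric} entirely.

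Plugging $\psi_S(u)^2\ge 1/s$ into Lemma~\ref{lem:test} and squaring the conclusion yields, for $|S|=s$ and any neighbor $v\notin S$ of $u$,
\[\lambda_1(\tilde A_{S\cup\{v\}})^2\ge \lambda_1(\tilde A_S)^2+\frac{1}{4 d^2 s}.\]
Combining with the row-sum bound $\lambda_1(\tilde A_S)\le (h+s-1)/d$, valid for simple induced subgraphs with $h$ self-loops per vertex, and using the hypothesis $s\le (d-h)/2$ so that $h+s\le d$, a direct calculation gives the multiplicative increase
\[\frac{\lambda_1(\tilde A_{S\cup\{v\}})}{\lambda_1(\tilde A_S)}\ge 1+\Omega(1/s^3),\]
which plays the role of Lemma~\ref{lem:increase} in this regime.

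With this in hand the trace-method argument from Theorem~\ref{thm:cyclesup} proceeds with a simplification: a single-vertex extension already suffices, since for $|S'|=s+1$ the diameter of $S'$ is at most $s$, and the path-extension trick inside $S'$ gives
\[\lambda_1(\tilde A_{S'})^{2k}\le (s+1)\,d^{2s}\,e_x^T\tilde A_{S'}^{2k}e_x\le (s+1)\,d^{2s}\,\P_x(W^{2k,s+1})\le (s+1)\,d^{2s}\,\P_x(W^{2k,2s}),\]
using the trivial inclusion $W^{2k,s+1}\subseteq W^{2k,2s}$. Selecting $S\in\Gamma_x^s$ to maximize $e_x^T\tilde A_S^{2k}e_x$ and using $|\Gamma_x^s|\le d^{2s}$, we obtain
\[\log\frac{\P_x(W^{2k,s})}{\P_x(W^{2k,2s})}\le 4s\log d+O(\log s)-\frac{c k}{s^3},\]
and the hypothesis $s\le \tfrac18(k/\log d)^{1/4}$ ensures that $4 s\log d\le k/(O(s^3))$ with enough slack to leave $-k/(100 s^3)$ after absorbing the $\log s$ term.

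The main delicate step is ensuring that the multiplicative ratio is $\Omega(1/s^3)$ rather than the looser $\Omega(1/(d^2 s))$ that would follow from the crude bound $\lambda_1(\tilde A_S)\le 1$: it is precisely the row-sum estimate $\lambda_1(\tilde A_S)\le (h+s)/d$ together with the hypothesis $s\le (d-h)/2$ that converts the per-step gain from $1/(4\lambda_1^2 d^2 s)$ into something depending only on $s$, not on $d$. Beyond this bookkeeping, the argument is a routine transcription of the constant-degree proof with the improved Perron estimate.
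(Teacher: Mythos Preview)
Your approach is essentially the paper's: since every vertex of $S$ with $|S|\le d-h$ lies on the boundary, take $u$ to be the Perron maximizer so that $\psi_S(u)\ge 1/\sqrt{|S|}$, feed this into Lemma~\ref{lem:test}, and rerun the trace argument of Theorem~\ref{thm:cyclesup}. You do add a clean simplification: squaring Lemma~\ref{lem:test} to get $\lambda_1(\tilde A_{S\cup\{v\}})^2\ge\lambda_1(\tilde A_S)^2+1/(4d^2s)$ already produces a multiplicative ratio $1+\Omega(1/s^3)$ from a \emph{single} vertex addition, so you can skip the $s$-step iteration to $|T|=2s$ that the paper performs (which is there to compensate for its looser per-step estimate $\lambda_1(A_{S\cup\{v\}})\ge\lambda_1+1/(6\lambda_1^2 s)$) and pass directly via $W^{2k,s+1}\subset W^{2k,2s}$.

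There is, however, a real gap in your treatment of $h$, and the paper is loose at exactly the same spot. You claim that $s\le(d-h)/2$, hence $h+s\le d$, delivers the multiplicative gain $1+\Omega(1/s^3)$. But the row-sum bound $\lambda_1(\tilde A_S)\le(h+s-1)/d$ only yields
\[
\frac{1}{4d^2 s\,\lambda_1(\tilde A_S)^2}\;\ge\;\frac{1}{4s(h+s-1)^2},
\]
which is $\Omega(1/s^3)$ only when $h=O(s)$; the hypothesis $s\le(d-h)/2$ removes the $d$-dependence but says nothing about $h$ versus $s$. (The paper likewise asserts $\lambda_1(A_T)\le 2s$, which requires $h\le 1$.) In fact \eqref{eqn:hicycles} is false for large $h$: already at $s=1$ a direct computation gives $\P_x(W^{2k,1})/\P_x(W^{2k,2})\ge \exp(-O(k/h))/(d-h)$, which exceeds $e^{-k/100}$ once $h\gg 1$ and $k$ is large. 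Since Theorem~\ref{thm:highdeg2} only invokes the case $h=0$ this causes no downstream harm, but you should either restrict to $h=O(1)$ explicitly or carry the $(h+s)$ dependence through to the exponent.
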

\begin{proof}
We show this via a small modification of the proof of Theorem \ref{thm:cyclesup}. Assume $s\le (d-h)/2$. The key observation is that each vertex has at least $d-h$ edges in $G$ to other vertices, so in a  subgraph of size at most $2s-1$ every vertex has at least one edge in $G$ leaving the subgraph. In this case, we can simply choose $u\in S$ as $u:=\arg\max_{w\in S} \psi_S(w)$ in Lemma \ref{lem:increase}. Therefore, considering the adjacency matrix,  (\ref{eq:onevert}) can be improved to 

\begin{equation*}
\lambda_1(A_{S\cup\{v\}})\geq\frac{1}{2}\left(\lambda_1+\sqrt{\lambda_1^2+\psi_S(u)^2}\right)\geq \lambda_1+\frac{\psi_S(u)^2}{6\lambda_1^2}\geq \lambda_1+\frac{1}{6\lambda_1^2s}.
\end{equation*}

Therefore, after adding $s$ vertices to $S$ according to the process of Lemma \ref{lem:increase}, we find a set $T\in\Gamma_x^{2s}$ satisfying
\[
\lambda_1(A_T)\geq \lambda_1+\frac{1}{6\lambda_1^2}\sum_{i=1}^s \frac1{s+i-1}\geq\lambda_1+\frac{\log 2}{6\lambda_1^2}\ge \lambda_1\left(1+\frac{1}{10\lambda_1^3}\right).
\]

Using this improved bound, and keeping in mind that $\lambda_1(A_T)\leq 2s$, we can replicate the argument above to get to the following improvement over (\ref{eqn:wbound}):
\begin{align*}
\P_x(W^{2k,s}) &\leq \exp\left(2s\log d+4s\log (2s)+\log (2s)-\frac{k}{80s^3}\right)\P_x(W^{2k,2s}).
\end{align*}
This implies
\begin{align*}
\P_x(W^{2k,s})\leq \exp\left(7s\log d-\frac{k}{80s^3}\right)\P_x(W^{2k,2s})
\leq \exp\left(-\frac{k}{100s^3}\right)\P_x(W_x^{2k,2s})
\end{align*}
whenever
$$ s\le \frac{1}{8}\left(\frac{k}{\log d}\right)^{1/4},$$
establishing \eqref{eqn:hicycles}.
\end{proof}

\begin{theorem}[Detailed Theorem \ref{thm:highdegmult}]\label{thm:highdeg2}

If $G$ is simple and $d$-regular, then
\begin{equation}\label{eqn:highdeg}
    m_G\left([(1-\frac{\log\log_d n}{\log_d n})\lambda_2,\lambda_2]\right)=
    \begin{cases}
    \O\left(n\cdot \frac{\log d\log\log n}{d}\right) & \textrm{ when } d\log^{1/2}d\le \alpha\log^{1/4} n\\
    \O\left(n\cdot \frac{\log^{1/2} d\log\log n}{\log^{1/4} n}\right) &\textrm{ when }
    d\log^{1/2}d\ge \alpha\log^{1/4}n
    \end{cases}
\end{equation}
for all\footnote{If $d\ge \exp(\sqrt{\log n})$ then \eqref{eqn:highdeg} is vacuously true.} $d\le \exp(\sqrt{\log n})$, where $\alpha:=\sqrt[4]{3}/4$.
\end{theorem}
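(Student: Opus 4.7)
The plan is to rerun the argument of Section \ref{sec:mult} on the unnormalized adjacency matrix of a simple $d$-regular graph, with Theorem \ref{thm:cyclesup} replaced by the sharper Theorem \ref{thm:highdegclosed}. Since $A_G=d\tilde A_G$ when $G$ is $d$-regular, the trace identity $\operatorname{tr}(A_H^{2k})=d^{2k}\cdot n\,\P(W^{2k}\cap\{\gamma\subset H\})$ relates eigenvalues of $A_H$ to the simple random walk on $G$. The diameter-$\ge 4$ argument from the proof of Theorem \ref{thm:multbound1} (if diameter $\le 3$ then $d\ge n^{1/3}$ and \eqref{eqn:highdeg} is vacuous) gives $\lambda_2(A_G)\ge 1$, so with $k:=\tfrac13\log_d n$ we get $d^{2k}=n^{2/3}\le n\lambda_2^{2k}$ and the top-eigenvalue contribution does not dominate the trace.

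With $k:=\tfrac13\log_d n$, $c:=2\log k$, and
\[s\ :=\ \min\!\left\{\tfrac18(k/\log d)^{1/4},\ d/2\right\},\]
summing Theorem \ref{thm:highdegclosed} (with $h=0$) over all starting vertices and using $\P(W^{2k,2s})\le\P(W^{2k})$ yields
\[\P(W^{2k,s})\ \le\ \exp\!\left(-\tfrac{k}{100 s^3}\right)\P(W^{2k}).\]
I then verify that for $d\le\exp(\sqrt{\log n})$, $k/(100 s^3)\ge c$ holds whichever term in the minimum defining $s$ is binding: when $s=d/2$ this reduces to $\log n\gtrsim d^3\log d\cdot\log\log n$, which follows from $d\log^{1/2}d\le\alpha\log^{1/4}n$ (since then $d^3\log d\lesssim\log^{3/4}n$); when $s=(k/\log d)^{1/4}/8$ it reduces to $\log^{1/4}n\log^{1/2}d\gtrsim\log\log n$, which holds for $d\ge 2$ and large $n$. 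Thus $\P(W^{2k,s})\le e^{-c}\P(W^{2k})$, the analogue of \eqref{eqn:sets}, and the random-deletion, trace, and Cauchy-interlacing machinery of Section \ref{sec:mult} (with $\Delta\mapsto d$, $\tilde A\mapsto A$) then yields the multiplicity bound $cn/s+O(n/\log_d n)$ on the stated interval around $\lambda_2(A_G)$.

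The two cases of the statement correspond to which term in the minimum defining $s$ is binding, with the crossover near $d\log^{1/2}d\asymp\log^{1/4}n$; the constant $\alpha=\sqrt[4]{3}/4$ is chosen so that both case bounds remain valid on either side of this threshold. When $d\log^{1/2}d\le\alpha\log^{1/4}n$, $s=d/2$ and $cn/s=O(n\log\log n/d)$, absorbed into the stated $O(n\log d\log\log n/d)$ since $\log d\ge\log 2$. When $d\log^{1/2}d\ge\alpha\log^{1/4}n$, $1/s\asymp\log^{1/2}d/\log^{1/4}n$, giving $cn/s=O(n\log^{1/2}d\log\log n/\log^{1/4}n)$. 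The additive $O(n/\log_d n)=O(n\log d/\log n)$ term is negligible compared to both under the hypothesis $d\le\exp(\sqrt{\log n})$, which is exactly the regime where the conclusion is nontrivial.

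Finally, the case $|\lambda_n(A_G)|>\lambda_2(A_G)$ is handled by the lazy-walk device of Section \ref{sec:mult}: passing to the graph $G'$ with $A_{G'}=A_G+dI$ (i.e., adding $h=d$ self-loops at every vertex to produce a $2d$-regular graph with no multi-edges) shifts every eigenvalue of $A_G$ by $d$, preserving all multiplicities. Applying Theorem \ref{thm:highdegclosed} to $G'$ with $d'=2d$ and $h=d$ gives $(d'-h)/2=d/2$, so the constraint on $s$ is unchanged, and $\log d'=\Theta(\log d)$ yields the same asymptotics. The main obstacle is the constant bookkeeping in verifying $k/(100 s^3)\ge c$ across both subcases; the analysis is tight enough that the threshold $d\le\exp(\sqrt{\log n})$ is simultaneously what the argument can handle and what makes the conclusion nontrivial, reflecting the full benefit of the improved $s^{-3}$ exponent afforded by Theorem \ref{thm:highdegclosed} over Theorem \ref{thm:cyclesup}.
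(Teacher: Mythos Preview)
Your proposal is correct and follows essentially the same approach as the paper: rerun the Section~\ref{sec:mult} argument with Theorem~\ref{thm:highdegclosed} in place of Theorem~\ref{thm:cyclesup}, taking $s=\min\{\tfrac18(k/\log d)^{1/4},d/2\}$ and splitting into cases according to which term is binding. You supply more detail than the paper does (explicit verification of \eqref{eqn:sets} in each subcase and of the lazy-walk reduction via Theorem~\ref{thm:highdegclosed} with $h=d$), but the structure and all key choices coincide with the paper's proof.
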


\begin{proof}

The proof is the same as the proof of Theorem \ref{thm:multbound1} in Section \ref{sec:mult}, except we choose different $s$. 
\begin{enumerate}
\item
If
$d\log^{1/2}d< \alpha\log^{1/4}n$ set
$$ s:= \stwo = \frac{d}{2}$$
with $h=0$.
Applying Theorem \ref{thm:highdegclosed} it is easily checked that \eqref{eqn:sets} is satisfied for large enough $n$, yielding a bound of
$$m_G\left([(1-\frac{\log\log_d n}{\log_d n})\lambda_2,\lambda_2]\right)=O\left(n\cdot \frac{\log d\log\log n}{d}\right).$$

\item If $G$ is simple, $d$-regular and $d\log^{1/2}d\ge \alpha\log^{1/4} n$, set 
$$ s:= \stwo = \frac{1}{8}\left(\frac{\log n}{\log^2 d}\right)^{1/4}$$
with $h=0$.
Then \eqref{eqn:sets} is again satisfied by applying \cref{thm:cyclesup} equation \eqref{eqn:hicycles}, and we conclude that
$$m_G\left([(1-\frac{\log\log_d n}{\log_d n})\lambda_2,\lambda_2]\right)=O\left(n\cdot \frac{\log^{1/2} d\log\log n}{\log^{1/4} n}\right).$$
\end{enumerate}
\end{proof}

\section{Lollipop}\label{sec:examples2}
Here, we show that if we do not assume that our graph is regular, the average support of a uniformly chosen (from the set of all such walks)  closed walk of length $k$ from a fixed vertex is no longer necessarily $k^{\Theta(1)}$ (as opposed to the average support of a random walk) . We take the lollipop graph, which consists of a clique of  $(d+1)$ vertices for fixed $d\geq 3$ and a path of length $n$ $\{u_1,\ldots,u_n\}$ attached to a vertex $v$ of the clique, where $n\gg k$. Here $\psi:=\psi(A)$ and $\lambda_1:=\lambda_1(A)$ are the Perron eigenvector and eigenvalue of the adjacency matrix of the graph.
\begin{lemma}\label{lem:ip}
$\psi(v)\geq 1/\sqrt {d+2}$.
\end{lemma}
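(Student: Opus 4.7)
The plan is to exploit the $S_d$-symmetry permuting the clique vertices other than $v$, which forces $\psi$ to take a common value, say $a$, on all of them. Writing $b:=\psi(v)$ and $c_j:=\psi(u_j)$, the eigenvector equation at any such vertex becomes $\lambda_1 a = (d-1)a + b$, whence $a = b/(\lambda_1 - d + 1)$. Since the Rayleigh quotient of the normalized indicator of the $(d+1)$-clique evaluates to $d$, we have $\lambda_1 \ge d \ge 3$; in particular $a\le b$.

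The main step will be to isolate the contribution of the path via a resolvent identity. Let $S := \{u_1,\ldots,u_n\}$; the principal submatrix $A_S$ is the adjacency matrix of the path $P_n$, and the only edge leaving $S$ is $(v,u_1)$. Restricting $A\psi = \lambda_1\psi$ to $S$ therefore yields $(\lambda_1 I - A_S)\,y = b\,e_{u_1}$, where $y:=(c_1,\ldots,c_n)$ is the restriction of $\psi$ to $S$. Consequently,
\[
\|y\|_2 \;\le\; b\cdot \bigl\|(\lambda_1 I - A_S)^{-1}\bigr\|_{\mathrm{op}} \;=\; \frac{b}{\lambda_1 - \lambda_1(A_S)}.
\]
Since $\lambda_1(A_S) = 2\cos\!\bigl(\pi/(n+1)\bigr) < 2$ and $\lambda_1 \ge d \ge 3$, the denominator is at least $1$, giving $\|y\|_2 \le b$.

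Feeding both bounds into the $\ell_2$ normalization of $\psi$ then yields
\[
1 \;=\; b^2 + d\,a^2 + \|y\|_2^2 \;\le\; b^2 + d\,b^2 + b^2 \;=\; (d+2)\,b^2,
\]
so $\psi(v) = b \ge 1/\sqrt{d+2}$. The only nontrivial obstacle is recognizing the restricted-resolvent identity; once one observes that $S$ meets its complement along the single edge $(v,u_1)$, everything reduces to the elementary inequality $\lambda_1(P_n) < 2 \le \lambda_1 - 1$.
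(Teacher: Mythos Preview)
Your proof is correct. The overall architecture matches the paper's: use the $S_d$-symmetry to conclude $a\le b$ on the clique, bound the $\ell_2$-mass of $\psi$ on the path by $b^2$, and finish via normalization. The only real difference is in how you control the path. The paper argues by a backward recursion along the path: from $\lambda_1\psi(u_n)=\psi(u_{n-1})$ and $\lambda_1\psi(u_i)=\psi(u_{i-1})+\psi(u_{i+1})$ one inductively obtains $\psi(u_{i-1})\ge(\lambda_1-1)\psi(u_i)$, hence (using $\lambda_1\ge 3$) geometric decay $\psi(u_i)\le 2^{-i}\psi(v)$, which gives $\psi(v)\ge\sum_i\psi(u_i)$ and thus $\psi(v)^2\ge\sum_i\psi(u_i)^2$. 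You instead write the restriction of the eigenvector equation to $S=\{u_1,\dots,u_n\}$ as $(\lambda_1 I-A_S)y=b\,e_{u_1}$ and bound $\|y\|_2$ by the resolvent norm $1/(\lambda_1-\lambda_1(P_n))\le 1$. Your argument is slightly slicker and immediately generalizes: it shows that any pendant subgraph $S$ attached to $v$ by a single edge contributes at most $b^2/(\lambda_1-\lambda_1(A_S))^2$ to $\|\psi\|_2^2$, with no need for the one-dimensional structure of a path. The paper's recursion, on the other hand, is completely elementary and avoids invoking the spectrum of $P_n$ or operator norms.
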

\begin{proof}
By symmetry, the value on all entries of the clique besides $v$ are the same. Call this value $\psi(b)$. Then by the eigenvalue equation we have
$\lambda_1 \psi(b)=\psi(v)+(d-1)\psi(b)$, so as $\lambda_1\geq d$, it must be that $\psi(v)\geq \psi(b)$. 

Similarly, to satisfy the eigenvalue equation, vertices on the path must satisfy the recursive relation
\[
\begin{array}{cc}
\lambda_1 \psi(u_i)=\psi(u_{i-1})+\psi(u_{i+1}) & 1\leq i\leq n-1\\
\lambda_1 \psi(u_n)=\psi(u_{n-1})
\end{array}
\]
where we define $v=u_0$. To satisfy this equation, we must have $\psi(u_i)\geq (\lambda_1-1)\psi(u_{i+1})$ for each $i$, so as $\lambda_1\geq d\geq 3$, $\psi(v)\geq \sum_{i=1}^n \psi(u_k)$. As the Perron vector is nonnegative, $\psi(v)^2\geq \sum_{i=1}^n \psi(u_k)^2$, and 
\[(d+2)\psi(v)^2\geq \psi(v)^2+d \psi(b)^2 +\sum_{i=1}^n \psi(u_k)^2=1,\]
so $\psi(v)\geq 1/\sqrt{d+2}$.
\end{proof}
Call $\gamma_v^{2k}$ the number of closed walks of length $2k$, and $\gamma_v^{2k,\geq \ell+d+1}$ as the subset of these walks with support at least $\ell+d+1$.
\begin{prop}
For $\ell\geq 2\log(k)/\log(\lambda_1/2)$,
 \[|\gamma_v^{2k,\geq \ell+d+1}|=O(k^{-2})|\gamma_v^{2k}|.\]
\end{prop}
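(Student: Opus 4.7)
The plan is to exploit the fact that $v$ is the unique cut vertex separating the clique from the path: every closed walk at $v$ decomposes into a concatenation of $v$-excursions, each entirely in the clique or entirely in the path. A walk reaching $u_\ell$ must contain at least one path-excursion reaching $u_\ell$, and any such excursion has length $\ge 2\ell$. Since path-edges ``cost'' roughly a factor $2/\lambda_1$ compared to clique-edges (the path has max degree $2$, while $\lambda_1\ge d\ge 3$), the resulting geometric suppression is strong enough to give $O(k^{-2})$ once $\ell$ exceeds the hypothesis's threshold.

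First, I would lower bound the denominator. By \cref{lem:ip} and the Perron expansion,
\[
|\gamma_v^{2k}|=(A^{2k})_{v,v}\ge \psi(v)^2\lambda_1^{2k}\ge \frac{\lambda_1^{2k}}{d+2}.
\]
Next I would reduce to counting closed walks reaching $u_\ell$: since the clique contributes at most $d+1$ distinct vertices to the support, any walk with support $\ge \ell+d+1$ must visit at least $\ell$ distinct path-vertices, which by linearity of the path means the walk reaches $u_\ell$.

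I would then perform the excursion decomposition: for any walk $W$ reaching $u_\ell$, let $E$ be the (unique) $v$-excursion containing the first visit to $u_\ell$; it is a path-excursion of length $t_E\ge 2\ell$. This gives a unique decomposition $W=W_1\cdot E\cdot W_2$, where $W_1$ is a closed walk at $v$ of length $a$ in $G':=G\setminus\{u_\ell,\ldots,u_n\}$ and $W_2$ is a closed walk at $v$ of length $c$ in $G$, with $a+t_E+c=2k$. I would bound the three pieces by $(A_{G'}^a)_{v,v}\le \lambda_1(A_{G'})^a\le \lambda_1^a$ (Cauchy interlacing together with the general inequality $(M^a)_{v,v}\le \lambda_1(M)^a$ for symmetric $M$), by $2^{t_E-2}$ (the middle of $E$ is a length $t_E-2$ walk starting at $u_1$ in the degree-$2$ path), and by $\lambda_1^c$.

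Summing over $a+t_E+c=2k$ and using the geometric series $\sum_{t\ge 2\ell}(2/\lambda_1)^{t}\le 3(2/\lambda_1)^{2\ell}$ (valid since $\lambda_1\ge 3$ so that $2/\lambda_1\le 2/3$), I obtain
\[
|\gamma_v^{2k,\ge \ell+d+1}|\le O(k)\,\lambda_1^{2k}\,(2/\lambda_1)^{2\ell}.
\]
Dividing by the lower bound on $|\gamma_v^{2k}|$ gives a ratio of $O(k(d+2)(2/\lambda_1)^{2\ell})$, and the hypothesis $\ell\ge 2\log(k)/\log(\lambda_1/2)$, i.e.\ $(\lambda_1/2)^{\ell}\ge k^2$, yields $(2/\lambda_1)^{2\ell}\le k^{-4}$, producing the desired $O(k^{-3})=O(k^{-2})$ bound. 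The argument is short and I anticipate no substantive obstacle; the only technical points are verifying uniqueness of the excursion decomposition (immediate from $v$ being a cut vertex) and the bound $(M^a)_{v,v}\le \lambda_1(M)^a$ for possibly odd $a$ (which follows from $|\lambda_i(M)|\le \lambda_1(M)$ combined with non-negativity of walk counts).
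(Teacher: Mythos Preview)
Your proposal is correct and follows essentially the same approach as the paper: both lower-bound $|\gamma_v^{2k}|$ via Lemma~\ref{lem:ip}, observe that support $\ge \ell+d+1$ forces a visit to $u_\ell$, decompose such a walk as (closed walk at $v$) $\cdot$ (path excursion of length $\ge 2\ell$) $\cdot$ (closed walk at $v$), and compare $2^{2\ell}$ against $\lambda_1^{2\ell}$. Your bookkeeping is slightly tidier---you mark the \emph{first} excursion reaching $u_\ell$ to make the map injective and bound the whole excursion by $2^{t_E-2}$ before summing a geometric series, whereas the paper splits the excursion into a $2$-choice prefix of length $2\ell$ and a $\lambda_1$-choice remainder---but the arithmetic and the final $O(k)\,\lambda_1^{2k}(2/\lambda_1)^{2\ell}$ bound are the same. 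One small remark: your inequality $(M^a)_{v,v}\le \lambda_1(M)^a$ is not true for arbitrary symmetric $M$; what you actually use is that $A_{G'}$ and $A_G$ are nonnegative, so Perron--Frobenius gives $|\lambda_i|\le \lambda_1$ and hence $e_v^T A^a e_v\le \lambda_1^a$.
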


\begin{proof}

For a closed walk to have support $\ell+d+1$, it must contain $u_\ell$. For such walks, once the path is entered, at least $2\ell$ steps must be spent in the path, as the walk must reach $u_{\ell}$ and return. Therefore, closed walks starting at $v$ that reach $u_\ell$ can be categorized as follows. First, there is a closed walk from $v$ to $v$. Then there is a closed walk from $v$ to $v$ going down the path containing $u_\ell$. On this excursion, the walk can only go forward or backwards, and it spends at least $2\ell$ steps within the path. For each of these steps, there are $2$ options. If we remain in the path after $2\ell$ steps, upper bound the number of choices until returning to $v$ by $\lambda_1$ at each step. After returning to $v$, the remaining steps form another closed walk. The number of closed walks from $v$ of length $i$ is at most $\lambda_1^i$. Therefore the number of closed walks with an excursion to $u_\ell$ is at most 
\[
\sum_{i=0}^{2k} \lambda_1^i 2^{2\ell}\lambda_1^{2k-2\ell -i}=(2k+1)\lambda_1^{2k-2\ell}2^{2\ell}.
\]

The total number of closed walks starting at $v$ is at least $\psi(v)^2\lambda_1^n$. Therefore the fraction of closed walks that have support at least $\ell$ is at most

\[
\frac{(2k+1)2^{2\ell} \lambda_1^{2k-2\ell}}{\lambda_1^{2k}/(d+2)}=\frac{(d+2)(2k+1)2^{2\ell}}{\lambda_1^{2\ell}}
\]
so for $\ell\geq 2(\log k)/\log(\lambda_1/2)$, this is $O(k^{-2})$.

\end{proof}
\begin{remark}
Instead of adding a path, we can add a tree (as exhibited in Figure \ref{fig:biglolli}). According to the same analysis, the probability a walk reaches depth further than $\Theta(\log k)$ is small. Therefore, in Theorem \ref{thm:closed} we can not get a sufficient bound on support from passing to depth, but must deal with support itself.
\end{remark}

\end{document}